\title{Second-Order Switching Time Optimization for Switched Dynamical Systems}
\author{Bartolomeo~Stellato, Sina Ober-Bl\"{o}baum~and~Paul J.\ Goulart% <-this % stops a space
\ifTechReport\else\thanks{This work was supported by the People Programme (Marie Curie Actions) of the European Union’s Seventh Framework Programme (FP7/2007-2013) under REA grant agreement no 607957 (TEMPO).}
\fi
\thanks{The authors are with the University of Oxford, Parks Road, Oxford, OX1 3PJ, U.K. (e-mail: {\texttt\{bartolomeo.stellato, sina.ober-blobaum, \mbox{paul.goulart\}@eng.ox.ac.uk}}).}% <-this % stops a space
}% <-this % stops a space
\begin{document}

  % make the title
  \maketitle

  % As a general rule, do not put math, special symbols or citations
  % in the abstract or keywords.
  \begin{abstract}
    \ifTwoColumn \boldmath \else \fi
    Switching time optimization arises in finite-horizon optimal control for switched systems where, given a sequence of continuous dynamics, one minimizes a cost function with respect to the switching times. We propose an efficient method for computing the optimal switching times for switched linear and nonlinear systems. A novel second-order optimization algorithm is introduced where, at each iteration, the dynamics are linearized over an underlying time grid to compute the cost function, the gradient and the Hessian efficiently. With the proposed method, the most expensive operations at each iteration are shared between the cost function and its derivatives, thereby greatly reducing the computational burden. We implemented the algorithm in the Julia package \texttt{SwitchTimeOpt} allowing the user to easily solve switching time optimization problems. In the case of linear dynamics, many operations can be further simplified and benchmarks show that our approach is able to provide optimal solutions in just a few $\mathrm{ms}$. In the case of nonlinear dynamics, two examples show that our method provides optimal solutions with up to two orders of magnitude time reductions over state-of-the-art approaches.
  \end{abstract}

  % Note that keywords are not normally used for peerreview papers.
  \begin{keywords}
    Optimal switching times, Switched systems, Hybrid systems, Optimization algorithms, Optimal control
  \end{keywords}

  % Add Empty Page if draft mode
  \ifTwoColumn \else
  \ifTechReport \else
  \newpage
  \fi
  \fi

% Sections
  %!TEX root = ../switching_time_optimization.tex

\section{Introduction}
\label{sec:Introduction}

Hybrid dynamical models commonly arise in several engineering problems where systems with continuous dynamics interact with discrete events. The analysis and control of such systems lies at the boundary between control engineering (system dynamics) on the one hand and computer science (discrete events) on the other. Hybrid systems have been used to model a wide variety of control problems in fields such as process control, automotive industry, power systems, traffic control and many others.

Optimal control of hybrid systems presents several challenges because it involves both continuous and discrete decisions which make the corresponding optimization problems $NP$-hard to solve~\cite{Belotti:2013dv} in general.
Although there have been several major advances in recent years in the solution of hybrid systems optimal control problems~\cite{Sager:R92r0gWi,Anonymous:6Ys-F2zR}, there are still many open questions regarding the quality of the solutions, the efficiency of the algorithms and the computation speed when dealing with fast dynamical systems.

% \SO{We have to carefully distinguish between systems for which the order of modes is fixed or free. My suggestion: switched systems are gerenal, i.e. a switching law encodes both, the switching times and the sequence of active vector fields (the modes). For switching time optimization problems we assume a fixed order of modes and only optimize the switching times. You should distinguish between these two cases in the literature review below and in the problem formulation in the paper.}

Switched systems are a particular class of hybrid systems consisting of several continuous subsystems where a switching law defines the active one at each time instant. A recent survey on computational methods for switched systems control appears in~\cite{Zhu:2014dj}.
In the present work we focus on optimal control of autonomous switched systems where the sequence of continuous dynamics is fixed. In particular, we study the problem of computing the optimal switching instants at which the ordered dynamics must change in order to minimize a given cost function. This problem is usually referred to as \emph{switching time optimization}.

This topic has been studied extensively in the last decade. In~\cite{Seatzu:2006jz} the authors provide a method to construct an offline mapping of the optimal switching times for linear dynamics from the initial state of the system.
% Given a switching order defined by a ``master algorithm'', the switching time optimization problem is solved offline reducing the computations to a simple function evaluation.
Even though this approach seems appealing at first sight, it suffers from the high storage requirements typical for explicit control approaches \cite{Bemporad:2002hfa} as the dimension of the system and the number of possible switchings increase.

% The switching order is given by a so called ``master algorithm'' solving an integer optimization problem at a higher level. Even if the switching time optimization problem is reduced to a simple function evaluation, this approach suffers from the high storage requirements typical for explicit control approaches \cite{Bemporad:2002hfa} as the dimension of the system  and the number of possible switchings increase.

More recent approaches focus on finding optimal switching times using iterative optimization methods. In~\cite{Egerstedt:2006hm} an expression for the gradient of the cost function with respect to the switching times is derived for the case of general nonlinear systems. A first-order method based on Armijo step sizes is then adopted to find the optimal switching times. An extension for discrete-time nonlinear systems is given in~\cite{Flasskamp:2013ta}. However, first order methods are very sensitive to the problem data and can exhibit slow convergence~\cite{nocedal2006numerical}. In~\cite{Johnson:2011dta} an expression for the Hessian of the cost function is derived for nonlinear dynamics and a second-order method is adopted to find the optimal switching times showing significant improvements on the number of iterations compared to the first-order method in~\cite{Egerstedt:2006hm}. However, both these first and second-order approaches suffer from the computational complexity of multiple numerical integrations required to solve the differential equations used to define the cost function, the gradient and the Hessian (in the second-order case). Note that the Hessian definition in~\cite{Johnson:2011dta} requires an additional set of integrations to be performed.

In the literature, there has been very limited focus on the computational effort required by the switching time optimization and the multiple integration routines. In~\cite{Ding:2009hla} the authors present a convergence analysis of a second-order method for switched nonlinear systems similar to the one in~\cite{Johnson:2011dta} without considering the overall computation time. In~\cite{Caldwell:2012ei} the switching time optimization problem for linear time-varying dynamics is formulated so that only a set of differential equations needs to be solved before the optimization procedure. Once the integration is performed, the steepest descent direction can be computed directly without solving any further differential equations. However, in~\cite{Caldwell:2012ei} no closed-form expression for the Hessian of the cost function is provided and only a steepest descent algorithm is adopted.

In this work we present a novel method to solve switching time optimization problems efficiently for linear and nonlinear dynamics. %In order to simplify the expressions, we first formulate the equivalent problem where the optimization variables are the switching intervals.
We develop efficiently computable expressions for the cost function, the gradient and the Hessian, exploiting shared terms in the most expensive computations. In this way, at each iteration of the optimization algorithm there is no significant increase in complexity in computing the gradient or the Hessian once the cost function is evaluated.  These easily computable expressions are obtained thanks to linearizations of the system dynamics around equally spaced grid points, and then integrated via independent matrix exponentials. In the case of linear dynamics, our method can be greatly simplified and the matrix exponentials decomposed into independent scalar exponentials that can be parallelized to further reduce the computation times.

Our method has been implemented in the open-source Julia package \texttt{SwitchTimeOpt}~\cite{SwitchTimeOpt} with a simple interface that allows the user to easily define and solve switching time optimization problems. Through Julia's \texttt{MathProgBase} interface, \texttt{SwitchTimeOpt} supports a wide variety of nonlinear solvers which can be quickly interchanged.

We provide three examples to benchmark the performance of our method. % which can be found in the \texttt{examples/} folder in~\cite{SwitchTimeOpt}.
The first, from~\cite{Caldwell:2012ei}, is a system with two unstable switched dynamics whose optimal switching times are obtained in few $\unit[]{ms}$ with our approach. The second one is the so-called Lotka-Volterra fishing problem~\cite{Volterra:1926ws} with nonlinear dynamics, integer control inputs and constant steady state values to be tracked. The third is a double-tank system first appeared in~\cite{Malmborg:1997tb} and used in switching time optimization setting in~\cite{Axelsson:2005dd}. In the final example we apply our algorithm to find the optimal switching times to track a time-varying reference level of the liquid in one of the tanks. In both the nonlinear examples our method, which is implemented on a high-level language, shows up to two orders of magnitude improvements over tailored state-of-the-art nonlinear optimal control software tools.

The rest of the paper is organized as follows. Section~\ref{sec:Problem Statement} defines the switching time optimization problem in terms of switching time intervals. In Section~\ref{sec:Preliminaries} we define the preliminary notions we used to obtain the main result: the linearization grid, the solution algorithm and the necessary definitions. In Section~\ref{sec:Main Result} we present the main result and describe the numerical computations required to obtain the cost function, the gradient and the Hessian. In Section~\ref{sec:Linear Switched Systems} we propose some simplifications and precomputations for our algorithm in the case of linear dynamics. In Section~\ref{sec:Examples} we describe our open-source Julia package and describe its application to several example problems. Conclusions are provided in Section~\ref{sec:Conclusion}.
 % Introduction
  %!TEX root = ../switching_time_optimization.tex

\section{Problem Statement}
\label{sec:Problem Statement}

Consider a switched autonomous system switching between $N$ modes, the dynamics for which can be expressed as
\begin{equation}\label{eq:nonlin_system_dynamics}
  \dot{x}(t) = f_i(x(t)), \quad t\in [\tau_i, \tau_{i+1}),\quad i=0,\dots,N,
\end{equation}
with each $f_i : \mathbb{R}^n \to \mathbb{R}^n$ assumed differentiable and the state  $x(t)\in \mathbb{R}^{n_x}$ assumed to have initial state $x(0) = x_0$.
We will refer to the times $\tau_i$ as the \emph{switching times}, and define also the \emph{switching intervals}
\[
\delta_i := \tau_{i+1}-\tau_i, \quad i=0,\dots,N,
\]
so that each $%\begin{equation}
  \tau_i = \sum_{j=0}^{j-1}\delta_j %, \quad i=0,\dots,N+1.
$. %\end{equation}
In the sequel, we will take the set of switching intervals $\delta := \{\delta_i\}_{i=0}^N$ as decision variables to be optimized, but will occasionally use the switching times $\tau:=\{\tau_i\}_{i=0}^{N+1}$ for convenience of notation.  We define the final time as $T_\delta := \sum_i\delta_i = \tau_{N+1}$, with initial time $\tau_0 = 0$.

Our goal is to find optimal switching intervals $\delta^*$ minimizing an objective function in Bolza form
\begin{equation}\label{eq:intro_cost_function}
  \underbrace{x(T_\delta)^\top \bar{E} x(T_\delta)}_{=:\psi(\delta)} + \underbrace{\int_{0}^{T_\delta} x(t)^\top \bar{Q} x(t) \mathrm{d}t}_{=:\mathcal{L}(\delta)}.
\end{equation}
The Mayer term $\psi$ penalizes the final state at time $T_{\delta}$ with weights defined by matrix $\bar{E} = \bar{E}^\top \in \mathbb{S}_{+}^{n_x}$. The Lagrange term $\mathcal{L}$ penalizes the integral between $0$ and $T_\delta$ of the quadratic state penalty weighted by the  symmetric positive semidefinite matrix $\bar{Q} = \bar{Q}^\top \in \mathbb{S}_{+}^{n_x}$.

We include a set of constraints on the switching intervals
\ifTwoColumn
  \begin{equation*}
    \begin{multlined}[t][\columnwidth]\Delta = \Big\{\delta \in \mathbb{R}_{+}^{N+1}\;\Big|\\ 0\leq \underline{b}_i \leq \delta_i \leq \bar{b}_i\; i=0,\dots,N\; \wedge\; T_\delta = T \Big\},\end{multlined}
  \end{equation*}
\else
  \begin{equation*}
    \Delta = \left\{\delta \in \mathbb{R}_{+}^{N+1}\;\middle|\; 0\leq \underline{b}_i \leq \delta_i \leq \bar{b}_i\; i=0,\dots,N\; \wedge\; T_\delta = T \right\},
  \end{equation*}
\fi
which requires all switching times to be nonnegative and the final time $T_\delta$ to be equal to some desired final time $T$. In addition, in case the $i$-th dynamics must be active for a minimum or maximum time, we allow lower and upper bounds $\underline{b}_i$ and $\bar{b}_i$ respectively. If neither minimum nor maximum constraints are imposed for interval $\delta_i$, we set $\underline{b}_i = 0$ and $\bar{b}_i = \infty$.

Our complete switching time optimization problem then takes the form
\begin{equation}\label{eq:original_sto_problem}
\begin{aligned}
&\underset{\delta}{\text{minimize}} && \int_{0}^{T_\delta} x(t) ^\top \bar{Q} x(t) \mathrm{d} t + x(T_\delta)^\top \bar{E} x(T_\delta) \\
&\text{subject to} &&  \dot{x}(t) = f_i(x(t)), \quad t\in [\tau_{i}, \tau_{i+1}),\quad i=0,\dots,N\\
&&& x(0) = x_0\\
% &&& x(\tau_i) \in \mathcal{P},\quad i=0,\dots,N\\
&&&\delta \in \Delta.
\end{aligned}\tag{$\mathcal{P}$}
\end{equation}

%A variety of problems in optimal control of hybrid systems can be cast in the form \eqref{eq:original_sto_problem}.
Although we restrict ourselves to the case where the switching order of the $N+1$ modes is prespecified, we allow the system dynamics to be identical for different $i$.  If we set some $\delta_i=0$, then the $i^{\textrm{th}}$ interval collapses and the dynamics switch directly from the $(i-1)^{\textrm{th}}$ to the $(i+1)^{\textrm{th}}$ mode.
This allows some dynamics to be bypassed and an arbitrary switching order realized without recourse to integer optimization; see~\cite{Gerdts:2006hr}. For example, given $N_{\mathrm{dyn}}$ different dynamics, one can cycle through them in the same predefined ordering $N_{\mathrm{dyn}}$ times for a total of $N_{\mathrm{dyn}}^2$ dynamics and $N_{\mathrm{dyn}}^2 - 1$ switching times, thereby allowing the dynamics to be visited in arbitrary order.  We illustrate the use of this approach in the examples in Section \ref{sec:Examples}.

% \todo[inline]{IDEA: What about combining this with cardinality constraints? Having many repetitions but forcing the solver to produce a solution with just a certain number of non null intervals.}

%The bit about finite input sets seems obvious to me.
%Additionally, given any nonlinear dynamical system $\dot{x}(t) = f(x(t), u(t))$ with finite control set $\mathcal{U} := \{u_i\}_{i=0}^{N}$, one can define $f_i(x) := f(x,u_i)$

% Note that every nonlinear systems with integer inputs can be cast into this form. Let us consider the dynamics $\dot{x}(t) = f(x(t), u(t))$ where the input vector $u(t)$ assumes constant values between the switching instants, i.e., $u(t) = u_i$ with $t\in[\tau_i, \tau_{i+1})$. The dynamics can be expressed in the form~\eqref{eq:nonlin_system_dynamics} by writing
% \ifTwoColumn
% \begin{equation}
%  \dot{x}(t) = f(x(t), u_i) = f_i(x(t)), \; t\in [\tau_i, \tau_{i+1}),
% \end{equation}
% with $i=0,\dots,N$.
% \else
%  \begin{equation}
%    \dot{x}(t) = f(x(t), u_i) = f_i(x(t)), \quad t\in [\tau_i, \tau_{i+1}),\quad i=0,\dots,N.
%  \end{equation}
%\fi

The cost function in problem~\eqref{eq:original_sto_problem} is non-convex in general, but it is smooth~\cite{Ding:2009hla} and its first and second derivatives can be used efficiently within a nonlinear optimization method, e.g.\ sequential quadratic programming (SQP) or interior point (IP) method to obtain locally optimal switching times.
In order to obtain a real-time implementable algorithm, we derive tractable formulations of the cost function, the gradient and the Hessian based on linearizations of the system dynamics.  We will show that this approach offers significant improvements in computational efficiency relative to competing approaches in the literature.
 % Problem statement
  %!TEX root = ../switching_time_optimization.tex

\section{Preliminaries}
\label{sec:Preliminaries}

% In this section we state the necessary definition and lemmas needed to obtain the main result.

\subsection{Time Grid and Dynamics Linearization}
\label{sub:Time Grid and Dynamics Linearization}

% \PG{I have changed to the terminology to 'background grid' here, in the (possibly wrong) hope that that provides some further clarification.
%
% I don't understand the need for the fixed grid though - provided that you define $n_i$ correctly, and assume that the summation over sub-time intervals is consistent, it doesn't really matter how it is done and the rest of the paper still seems like it should work.
% }

% \todo[inline]{Write better definition of $n_i$.}

In order to integrate the switched nonlinear dynamics~\eqref{eq:nonlin_system_dynamics}, we define an equally spaced ``background'' grid of $n_{\mathrm{grid}}$ time-points from $0$ to the final time $T$, and hold these background grid points fixed regardless of the choice of switching times $\tau_i$.  Note that,  %using the vector $\bar{t}$ from $= \begin{bmatrix} \bar{t}_0 & \dots & \bar{t}_{n_{\mathrm{grid}}-1}\end{bmatrix}^\top$ with
% $\bar{t}_0 = 0$ and $\bar{t}_{n_{\mathrm{grid}}-1} = T$.
% The complete sorted vector obtained by merging the grid vector $\bar{t}$ and the vector of switching times $\tau$ (eliminating the duplicates $0$ and $T$) is denoted by $t_{\mathrm{full}}$.
% $t_{\mathrm{full}} = \begin{bmatrix} t_0 & \dots & t_{N+n_{\mathrm{grid}}}\end{bmatrix}^\top$.
% The index of each switching time $\tau_i$ inside the vector $t_{\mathrm{full}}$ is denoted as $p_i$.
depending on the intervals $\delta$, the switching times $\tau$ can be in different positions relative to the background grid while maintaining the ordering $\tau_{i} \leq \tau_{i+1}$.

We subdivide each interval $\delta_i$ according to the background grid points falling between $\tau_{i}$ and $\tau_{i+1}$, with $\tau_i^j$ denoting the $j^\text{th}$ grid point after the switching time $\tau_i$.   The number of such background grid  points between switching times $\tau_i$ and $\tau_{i+1}$ is denoted by $n_i$, which is itself a function of the switching times $\tau$.  Note that % since $\tau_{N+1} =T$
we set $n_{N+1} = 0$.

For notational convenience, we will define $\tau_i^0 := \tau_i$ and $\tau_i^{n_i+1} := \tau_{i+1}$ for $i=0,\dots,N$.  We further define a partitioning of the switching intervals such that $\delta_i^j$ is the $j^{\text{th}}$ subdivision of interval $i$, so that \begin{equation}\label{eq:deltai_partition}
  \delta_i = \sum_{j=0}^{n_i}\delta_i^j \quad \text{and} \quad   \tau_i^{k} = \tau_i + \sum_{j=0}^{k-1}\delta_i^j,\quad \forall k\leq n_i.
\end{equation}
In subsequent sections we will define a number of vector and matrix quantities to be associated with the time instants $\tau_i^j$, and will adopt complementary notation, e.g.\ vector $x_i^j$ and matrix $M_i^j$ are associated with the $j^{\text{th}}$ grid point after switching time $\tau_i$.  Likewise $x_i^{n_i + 1} := x_{i+1}, x_i^0 := x_i$ and $M_i^{n_i + 1} := M_{i+1}, M_i^0 := M_i$.

A portion of the grid is presented in Figure~\ref{fig:time_grid} where the smaller ticks represent the background grid points.
%
%
% for notational simplicity we consider $\tau_i^0 = \tau_i$ and $\tau_i^{n_i+1} = \tau_{i+1}$).
% \todo[inline]{keep last phrase? Move Section 3.2 to here explaining the label convention.}
%
% To simplify the notation in the remainder of the paper, we will consider $\tau_i^0 = \tau_i$ and $\tau_i^{n_i+1} = \tau_{i+1}$ with $i=0,\dots,N$. The same applies to every matrix and vector defined in this paper with the same indexing (subscript $i$ and superscript $j$), e.g. $A_i^0 = A_i$ and $A_i^{n_i+1} = A_{i+1}$ with $i=0,\dots,N$.

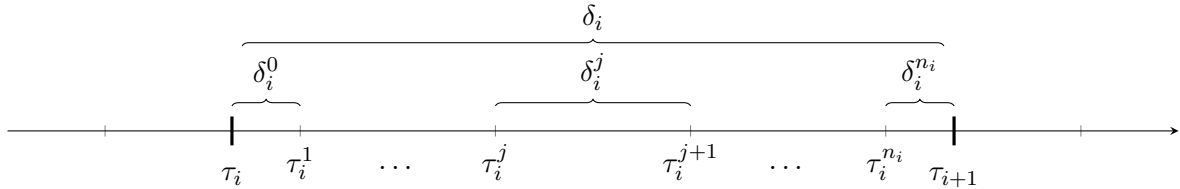
\begin{figure}[h]
  \centering
  % \resizebox{\columnwidth}{0.1\textheight}{%

%   \tikzset{
%     ultra thin/.style= {line width=0.1pt},
%     very thin/.style=  {line width=0.2pt},
%     thin/.style=       {line width=0.4pt},% thin is the default
%     semithick/.style=  {line width=0.6pt},
%     thick/.style=      {line width=0.8pt},
%     very thick/.style= {line width=1.2pt},
%     ultra thick/.style={line width=1.6pt}
% }

  \ifTwoColumn
    \begin{tikzpicture}
      \begin{axis}[
        % height=0.1\textheight,
        % \ifTwoColumn \else
        width=1.2\columnwidth,
        % \fi
        hide y axis,
        axis x line = middle,
        axis y line = left,
        enlarge x limits = 0.025,
        ymin = -2.0, ymax = 2.0,
        xmin = 0, xmax = 10,
        xtick ={0,2,4,6,8,10},
        xticklabel style={align=center, text height = 2ex},
        xticklabels={,$\tau_{i}^{1}$,$\tau_{i}^{j}$,$\tau_{i}^{j+1}$,$\tau_{i}^{n_i}$,},
        extra x ticks={1.3,8.7},
        extra tick style={xticklabels={$\tau_i$,%\\[0.175ex]$t_{p_{i}}$,
        $\tau_{i+1}$},tick style={very thick,black},         xticklabel style={align=center}, major tick length=1em}]

  \else
    \begin{tikzpicture}
      \begin{axis}[
        % height=0.1\textheight,
        % \ifTwoColumn \else
        width=1.1\columnwidth,
        % \fi
        hide y axis,
        axis x line = middle,
        axis y line = left,
        enlarge x limits = 0.1,
        ymin = -2.0, ymax = 2,
        xmin = 0, xmax = 10,
        xtick ={0,2,4,6,8,10},
        xticklabel style={align=center, text height = 2ex},
        xticklabels={,$\tau_{i}^{1}$,$\tau_{i}^{j}$,$\tau_{i}^{j+1}$,$\tau_{i}^{n_i}$,},
        extra x ticks={1.3,8.7},
        extra tick style={xticklabels={$\tau_i$,%\\[0.175ex]$t_{p_{i}}$,
        $\tau_{i+1}$},tick style={very thick,black},         xticklabel style={align=center}, major tick length=1em}]
    \fi

        \ifTwoColumn
        \node[anchor = north] at (3.0,-0.15) {$\dots$};
        \node[anchor = north] at (7.0,-0.15) {$\dots$};
        \else
        \node[anchor = north] at (3.0,-0.1) {$\dots$};
        \node[anchor = north] at (7.0,-0.1) {$\dots$};
        \fi

        % Main delta
        \node[anchor = south] (taui) at (1.3,0){};
        \node[anchor = south] (tauip) at (8.7,0){};
        \draw [decoration={brace, raise= 6ex}, decorate] (taui) -- node [anchor = south, yshift = 6.5ex, pos = 0.5]{$\delta_i$}(tauip);

        % Secondary deltas
        \draw [decoration={brace, raise= 2ex}, decorate] (1.3,0) -- node [anchor = south, yshift = 2.5ex, pos = 0.5]{$\delta_i^0$}(2.0, 0);
        \draw [decoration={brace, raise= 2ex}, decorate] (4,0) -- node [anchor = south, yshift = 2.5ex, pos = 0.5]{$\delta_i^j$}(6.0, 0);
        \draw [decoration={brace, raise= 2ex}, decorate] (8.0,0) -- node [anchor = south, yshift = 2.5ex, pos = 0.5]{$\delta_{i}^{n_i}$}(8.7, 0);

    \end{axis}
    \end{tikzpicture}
    	\caption{Switching times within the time grid.}
    	\label{fig:time_grid}
  \end{figure}

In order to make the computations of the cost function and its derivatives numerically efficient, we linearize the dynamics around each time instant of the background grid and each switching time.

For a given time instant $\tau_i^j$ we consider the linearized dynamics around the state ${x_{i}^{j} \coloneqq x(\tau_i^{j})}$ with $j=0,\dots,n_i+1$ (to simplify the notation we consider $x_i = x_i^0 = x(\tau_i)$) by writing
\begin{equation}
  \begin{aligned}
    \dot{x}(t) &\approx f_i(x_i^j) + J_{f_i}(x_i^j)(x(t) - x_i^j)\\
    & = J_{f_i}(x_i^j)x(t) + \Big(f_i(x_i^j) - J_{f_i}(x_i^j)x_i^j\Big)
  \end{aligned}
\end{equation}
where
\begin{equation}\label{eq:jacobian_nonlinear_dynamics}
  J_{f_i}(x_i^j)= \left.\frac{\partial f_i}{\partial x}\right|_{x_i^j = {x(\tau_i^j)}}
\end{equation}
is the Jacobian of the $i^{\textrm{th}}$ nonlinear dynamics evaluated at $x_i^j$.
We can obtain an approximate linear model by augmenting the dynamics with an additional constant state so that
\begin{equation}\label{eq:linearized_dynamics_sto}
  \dot{x}(t)=A_{i}^{j}x(t), \quad t\in [\tau_i^j, \tau_i^{j+1}),
\end{equation}
where
  \begin{equation}\label{eq:linearized_dynamics_sto1}
  A_{i}^{j} = \begin{bmatrix}J_{f_i}(x_i^j) & f_i(x_i^j) - J_{f_i}(x_i^j)x_i^j\\ 0 & 0\end{bmatrix}
  \end{equation}
  and $x(t)$ is an augmented version of the previous state definition, i.e.~$x(t) \mapsfrom \begin{bmatrix}x(t),1\end{bmatrix}^\top$.

% \subsection{Notation}
% \label{sub:Notation}
% To simplify the notation in the remainder of the paper, we will consider $\tau_i^0 = \tau_i$ and $\tau_i^{n_i+1} = \tau_{i+1}$ with $i=0,\dots,N$. The same applies to every matrix and vector defined in this paper with the same indexing (subscript $i$ and superscript $j$), e.g. $A_i^0 = A_i$ and $A_i^{n_i+1} = A_{i+1}$ with $i=0,\dots,N$.

\subsection{Solution Approach}
\label{sub:Solution Approach}

Once the dynamics are linearized as in~\eqref{eq:linearized_dynamics_sto}, and after defining the augmented cost function weights $Q \coloneqq \mathrm{blkdiag}(\bar{Q}, 0)$ and $E \coloneqq \mathrm{blkdiag}(\bar{E}, 0)$, we can approximate the problem~\eqref{eq:original_sto_problem} as
\ifTwoColumn
  \begin{equation}\label{eq:linearized_sto_problem}
  \begin{aligned}
  &\underset{\delta}{\text{minimize}} && \int_{0}^{T_\delta} x(t) ^\top Q x(t) \mathrm{d} t + x(T_\delta)^\top E x(T_\delta) \\
  &\text{subject to} &&  \begin{multlined}[t][.6\columnwidth]\dot{x}(t) = A_i^j x(t), \quad t\in [\tau_i^j, \tau_i^{j+1}),\\ \; j=0,\dots,n_i,\quad i=0,\dots,N\end{multlined}\\
  &&& x(0) = x_0\\
  % &&& x(\tau_i) \in \mathcal{P},\quad i=0,\dots,N\\
  &&&\delta \in \Delta.
  \end{aligned}
  \tag{$\mathcal{P}_{\mathrm{lin}}$}
  \end{equation}
\else
  \begin{equation}\label{eq:linearized_sto_problem}
  \begin{aligned}
  &\underset{\delta}{\text{minimize}} && \int_{0}^{T_\delta} x(t) ^\top Q x(t) \mathrm{d} t + x(T_\delta)^\top E x(T_\delta) \\
  &\text{subject to} &&  \dot{x}(t) = A_i^j x(t), \quad t\in [\tau_i^j, \tau_i^{j+1}),\quad i=0,\dots,N, \quad j=0,\dots,n_i\\
  &&& x(0) = x_0\\
  % &&& x(\tau_i) \in \mathcal{P},\quad i=0,\dots,N\\
  &&&\delta \in \Delta.
  \end{aligned}
  \tag{$\mathcal{P}_{\mathrm{lin}}$}
  \end{equation}
\fi

We will make use of problem~\eqref{eq:linearized_sto_problem} to approximate the original problem~\eqref{eq:original_sto_problem} at each iteration of a standard second-order nonlinear programming routine such as IPOPT~\cite{Wachter:2006hk}. By linearizing of the system dynamics around the state trajectory, we can directly construct problem~\eqref{eq:linearized_sto_problem}.

In the remainder of the paper we focus on the numerical evaluation of the cost function $J(\delta)$, the gradient $\nabla J(\delta)$ and the Hessian $H_J(\delta)$ for problem~\eqref{eq:linearized_sto_problem} which can be computed efficiently.

A prototype algorithm is sketched in Algorithm~\ref{alg:solve_sto_problem}. Note that in line~\ref{alg:lin:linearize_sto_problem} the act of linearizing problem~\eqref{eq:original_sto_problem} produces the majority of the computational work with the benefit that the cost function and its derivatives can be then computed efficiently in line~\ref{alg:lin:compute_J_and_derivs}.

%
% In the remainder of the paper on the numerical evaluation of the cost function $J(\delta)$, the gradient $\nabla J(\delta)$ and the Hessian $H_J(\delta)$ of problem~\eqref{eq:linearized_sto_problem}.
%
% OLD\\
% We will solve \eqref{eq:linearized_sto_problem} using standard tools from nonlinear programming such as IPOPT~\cite{Wachter:2006hk}, and focus the remainder of the paper on the numerical evaluation of the cost function $J(\delta)$, the gradient $\nabla J(\delta)$ and the Hessian $H_J(\delta)$.
%
% A prototype algorithm is sketched in Algorithm~\ref{alg:solve_sto_problem}.
\begin{algorithm}
\caption{Solve Switching Time Optimization Problem~\eqref{eq:original_sto_problem}}
\label{alg:solve_sto_problem}
\begin{algorithmic}[1]
\Function{SwitchingTimeOptimization}{}
% \Statex Main NLP algorithm
  \While{Termination conditions not met}
  % \ifTwoColumn
    \State Linearize problem~\eqref{eq:original_sto_problem} \label{alg:lin:linearize_sto_problem}
    \State Compute $J(\delta)$, $\nabla J (\delta)$ and $H_J(\delta)$ for~\eqref{eq:linearized_sto_problem} \label{alg:lin:compute_J_and_derivs}
    \State Perform one NLP solver iteration obtaining $\delta^{(k+1)}$
  % \else
    % \State Compute $J(\delta)$, $\nabla J (\delta)$ and $H_J(\delta)$ for~\eqref{eq:linearized_sto_problem} \label{alg:lin:compute_J_and_derivs}
    % \State Perform one NLP solver iteration obtaining a new $\delta^{(k+1)}$
    % \fi
  \EndWhile
\EndFunction
\end{algorithmic}
\end{algorithm}

% In the remainder of the paper, we focus on the linearized problem~\eqref{eq:linearized_sto_problem} and derive efficient formulations of $J(\delta)$, $\nabla J (\delta)$ and $H_J(\delta)$.

\subsection{Definitions}
\label{sub:Definitions}

We next present some preliminary definitions required to develop our main result.

% \todo[inline]{Fix Grid definition with fixed one}
% \begin{definition}[State transition matrix between switching times]\label{def:state_trans_mat_sw_times}
%   We define the state transition matrix between two switching times $\tau_i$ and $\tau_l$ with $l > i$ as
%     \begin{equation}
%         \Phi(\tau_{l}, \tau_i) = \prod_{q=i}^{l-1}\prod_{p=0}^{n_q}e^{A_q^p\delta_q^p},
%     \end{equation}
%   where the products are applied iteratively from the left side.
% \end{definition}

\begin{definition}[State evolution]\label{def:state_evolution}
  The matrix $\Phi(t, \tau_i^j)$ is the state transition matrix of the linearized system from $\tau_i^j$ to $t$, and is defined as:
  \begin{equation}\label{eq:state_trans_mat}
    \Phi(t, \tau_i^j) \coloneqq  e^{A^m_l(t - \tau_l^m)}\prod_{p=0}^{m-1}e^{A_{l}^p\delta_l^p}\prod_{q=i+1}^{l-1}\prod_{p=0}^{n_q}e^{A_q^p\delta_q^p}\prod_{p=j}^{n_{i}}e^{A_{i}^{p}\delta_{i}^p},
  \end{equation}
  where $\tau_l$ and $\tau_l^m$ are the last switching time and the last grid point before $t$ respectively.
\end{definition}
Given a time instant $\tau_i^j$ and a time $t\in \mathbb{R}_{+}$ such that $t\geq \tau_i^j$ we can define the state $x(t)$ as
\begin{equation}
  x(t) = \Phi(t,\tau_i^j)x_i^j.
\end{equation}
Observe that if we consider transition between two switching times $\tau_i$ and $\tau_j$ with $\tau_i \leq \tau_j$, the state transition matrix in~\eqref{eq:state_trans_mat} simplifies to
\begin{equation}
    \Phi(\tau_{l}, \tau_i) = \prod_{q=i}^{l-1}\prod_{p=0}^{n_q}e^{A_q^p\delta_q^p},
\end{equation}
which will be used extensively in most of the computations in the remainder of the paper.

\begin{definition}[Cost-to-go matrices]\label{def:matrices_P_F_S}
  Given the time $\tau_i^j$, define matrix $P_{i}^j\in\mathbb{S}_+^{n_x}$ as
  \begin{equation}\label{eq:def_P}
    P_{i}^j \coloneqq \int_{\tau_i^j}^{T_\delta}\Phi(t,\tau_i^j)^\top Q \Phi(t, \tau_i^j)\mathrm{d}t,
  \end{equation}
  where $\Phi(t,\tau_i^j)$ is the state transition matrix in Definition~\ref{def:state_evolution}.  Define the matrix $F_{i}\in\mathbb{S}_+^{n_x}$ as
  \begin{equation}\label{eq:def_F}
    F_{i}^j \coloneqq \Phi(T_\delta, \tau_i^j)^\top E  \Phi(T_\delta, \tau_i^j).
  \end{equation}
  % where $\Phi(T_\delta, \tau_i^j)$ is the state transition matrix from Definition~\ref{def:state_evolution}.
  Define the sum of these two matrices as
  \begin{equation}\label{eq:S_sum_P_F}
    S_{i}^j \coloneqq P_{i}^j + F_{i}^j, \quad i=0,\dots,N+1.
  \end{equation}
\end{definition}
  Following the convention described in Section~\ref{sub:Time Grid and Dynamics Linearization}, we will denote $P_i^0 = P_i, F_i^0=F_i, S_i^0 = S_i$ and $P_i^{n_i+1} = P_{i+1}, F_{i}^{n_i+1} = F_{i+1}, S_{i}^{n_i +1}= S_{i+1}$.

\begin{definition}[Matrix $C$]\label{def:C_matrix_sto}
  Given matrices $S_{i}$ with $i=0,\dots,N+1$ and $A_i^{n_i}$ with $i=0,\dots,N$, define matrices $C_{i} \in \mathbb{S}_{+}^{n_x}$ as
  \begin{equation}\label{eq:C_matrix_sto}
    C_{i} = Q + \left(A_i^{n_i}\right)^\top S_{i+1} + S_{i+1} A_i^{n_i}, \quad i=0,\dots,N.
  \end{equation}
\end{definition}
 % Preliminaries
  %!TEX root = ../switching_time_optimization.tex

\section{Numerical Solution Method}
\label{sec:Main Result}

We are now in the position to derive the cost function and its first and second derivatives for Problem~\eqref{eq:linearized_sto_problem}.

\begin{theorem}[Cost function $J(\delta)$, gradient $\nabla J(\delta)$ and Hessian $H_J(\delta)$] \label{thm:main_result_sto} The following holds:
  \begin{enumerate}[label={\upshape(\roman*)}]
    \item The cost function $J(\delta)$ is the following quadratic function of the initial state
    \begin{equation}\label{eq:cost_function}
      J(\delta) = x_0^\top S_{0} x_0
    \end{equation}
    \item The gradient $\nabla J(\delta)$ of the cost function can be computed as
    \begin{equation}\label{eq:gradient}
      \nabla J(\delta)_i = \frac{\partial J(\delta)}{\partial \delta_i} = x_{i+1}^\top C_{i}x_{i+1}, \quad i = 0,\dots,N
    \end{equation}
    \item The Hessian $H_J(\delta)$ of the cost function can be computed as
    \ifTwoColumn
      \begin{equation}\label{eq:hessian}
        \begin{aligned}
        &H_J(\delta)_{i,\ell} = \frac{\partial^2 J(\delta)}{\partial \delta_i \partial \delta_{\ell}} \\
        &= \begin{dcases}2 x_{\ell+1}^\top C_{\ell} \Phi(\tau_{\ell+1}, \tau_{i+1})A_i^{n_i}x_{i+1}  & \ell \geq i\\
       H_J(\delta)_{\ell,i} & l < i,\end{dcases}
       \end{aligned}
      \end{equation}
    \else
      \begin{equation}\label{eq:hessian}
        H_J(\delta)_{i,\ell} = \frac{\partial^2 J(\delta)}{\partial \delta_i \partial \delta_{\ell}} = \begin{dcases}2 x_{\ell+1}^\top C_{\ell} \Phi(\tau_{\ell+1}, \tau_{i+1})A_i^{n_i}x_{i+1}  & \ell \geq i\\
       H_J(\delta)_{\ell,i} & l < i,\end{dcases}
      \end{equation}
      \fi
    where $i,\ell=0,\dots,N$.
\end{enumerate}
\end{theorem}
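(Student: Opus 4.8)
The plan is to prove the three parts in order, with part~(i) essentially immediate and parts~(ii)--(iii) reduced to two structural facts about how the linearized trajectory depends on a single interval duration. For (i), I would substitute $x(t)=\Phi(t,0)x_0$ and $x(T_\delta)=\Phi(T_\delta,0)x_0$ into the cost, pull $x_0$ out of both the integral and the terminal term, and recognise the bracketed matrices as exactly $P_0$ and $F_0$ from Definition~\ref{def:matrices_P_F_S}; their sum is $S_0$, giving $J(\delta)=x_0^\top S_0 x_0$.

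The backbone of (ii) and (iii) is a pair of sensitivity facts that I would isolate first as a lemma. First, \emph{endpoint sensitivity}: increasing $\delta_i$ only lengthens the final subdivision $\delta_i^{n_i}$ of interval $i$, since the interior background grid points, the earlier switching time $\tau_i$, and hence $x_i$, are held fixed; differentiating $x_{i+1}=e^{A_i^{n_i}\delta_i^{n_i}}\prod_{p<n_i}e^{A_i^p\delta_i^p}x_i$ through the leading exponential then gives $\partial x_{i+1}/\partial\delta_i = A_i^{n_i}x_{i+1}$. Second, \emph{future invariance}: because $\Phi(\tau_k,\tau_j)$ is a product of matrix exponentials involving only the subdivisions of intervals $j,\dots,k-1$, the quantities $\Phi(\tau_{\ell+1},\tau_{i+1})$, $S_k$ and $C_\ell$ are independent of $\delta_i$ whenever $i$ precedes the first interval they involve (i.e.\ $i<j$, $i<k$, $i\le\ell$); intuitively, perturbing $\delta_i$ merely time-translates the downstream trajectory without altering the cost-to-go structure. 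I expect this invariance to be the main obstacle: one must track carefully how the fixed background grid redistributes among the shifted switching times and argue that, to first order, only $\delta_i^{n_i}$ carries the perturbation while the future cost-to-go matrices stay fixed.

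With these facts, (ii) follows by splitting $J=\int_0^{\tau_{i+1}} x^\top Q x\,\mathrm{d}t + x_{i+1}^\top S_{i+1}x_{i+1}$, the split being exact via $\Phi(t,\tau_i)=\Phi(t,\tau_{i+1})\Phi(\tau_{i+1},\tau_i)$ together with Definition~\ref{def:matrices_P_F_S}. Differentiating the running-cost term with Leibniz' rule, the interior variation vanishes (for fixed $t<\tau_{i+1}$ the state is independent of $\delta_i$) and only the moving upper limit contributes $x_{i+1}^\top Q x_{i+1}$; differentiating the cost-to-go term with $S_{i+1}$ held fixed gives $2x_{i+1}^\top S_{i+1}A_i^{n_i}x_{i+1}$. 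Summing and symmetrising via $2x^\top S_{i+1}A_i^{n_i}x = x^\top\big((A_i^{n_i})^\top S_{i+1}+S_{i+1}A_i^{n_i}\big)x$ reproduces $x_{i+1}^\top C_i x_{i+1}$. For (iii), using symmetry of the Hessian I would compute $H_{i,\ell}=\partial(\nabla J(\delta)_\ell)/\partial\delta_i$ for $\ell\ge i$ by differentiating $x_{\ell+1}^\top C_\ell x_{\ell+1}$; future invariance makes $C_\ell$ independent of $\delta_i$, so only $x_{\ell+1}=\Phi(\tau_{\ell+1},\tau_{i+1})x_{i+1}$ varies, and the chain rule with the endpoint-sensitivity fact yields $\partial x_{\ell+1}/\partial\delta_i = \Phi(\tau_{\ell+1},\tau_{i+1})A_i^{n_i}x_{i+1}$, hence $H_{i,\ell}=2x_{\ell+1}^\top C_\ell\Phi(\tau_{\ell+1},\tau_{i+1})A_i^{n_i}x_{i+1}$; the case $\ell<i$ is then covered by symmetry.
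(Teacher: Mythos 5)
Your proposal is correct and, in substance, rests on exactly the two facts the paper's proof uses: the exponential-derivative identity (your ``endpoint sensitivity'' $\partial x_{i+1}/\partial\delta_i = A_i^{n_i}x_{i+1}$ is precisely Lemma~\ref{lem:state_trans_mat_deriv} applied to the state), and the invariance of downstream quantities with respect to $\delta_i$. The organization differs in two ways worth recording. First, the paper routes everything through a matrix-valued lemma (Lemma~\ref{lem:deriv_S}), $\partial S_a/\partial\delta_i = \Phi(\tau_{i+1},\tau_a)^\top C_i \Phi(\tau_{i+1},\tau_a)$ for $\tau_a\le\tau_{i+1}$, proved by splitting the integrals defining $P_a$ and $F_a$ at $\tau_i$ and $\tau_{i+1}$; the gradient is then $x_0^\top(\partial S_0/\partial\delta_i)x_0$. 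Your scalar split of $J$ at $\tau_{i+1}$ plus Leibniz is the same computation pre-multiplied by $x_0^\top$ and $x_0$, so nothing is lost, but the paper's matrix form is reusable: it is applied a second time, to $\partial S_{i+1}/\partial\delta_\ell$, inside the Hessian proof. Second, you differentiate in the opposite order for the Hessian: the paper computes $\partial(\nabla J(\delta)_i)/\partial\delta_\ell$ for $\ell>i$ (expanding $C_i$ via Definition~\ref{def:C_matrix_sto}, with only $S_{i+1}$ depending on $\delta_\ell$) and handles the diagonal $\ell=i$ by a separate calculation; you compute $\partial(\nabla J(\delta)_\ell)/\partial\delta_i$ for $\ell\ge i$, where $C_\ell$ is inert and a single chain rule through $x_{\ell+1}=\Phi(\tau_{\ell+1},\tau_{i+1})x_{i+1}$ covers diagonal and off-diagonal entries at once. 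That is mildly more economical; it tacitly uses equality of mixed partials, which is legitimate wherever $J$ is twice differentiable --- the same regime in which the paper's formulas hold.

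One concrete warning about the step you single out as the main obstacle. The ``future invariance'' cannot actually be proved from the fixed-grid geometry, and an attempt to do so would fail: perturbing $\delta_i$ by $\epsilon$ shifts every downstream switching time by $\epsilon$ over a grid that does not move, so in each downstream interval $k>i$ the first subdivision $\delta_k^0$ shrinks by $\epsilon$ while the last subdivision $\delta_k^{n_k}$ grows by $\epsilon$; since $A_k^0\ne A_k^{n_k}$ in general, the products of exponentials forming $\Phi(\tau_{k+1},\tau_k)$, and hence $S_{i+1}$ and $C_\ell$, do change to first order. The paper does not prove invariance either --- it assumes it, asserting in the proof of Lemma~\ref{lem:deriv_S} that $P_{i+1}$ and $F_{i+1}$ are independent of $\delta_i$, and separately noting that the derivative is undefined (one-sided) when a switching time coincides with a background grid point. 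In other words, the downstream cost-to-go matrices are treated by convention as functions of the downstream durations only, with the linearization structure frozen; this is part of the definition of the approximation, not a theorem. Your proof goes through verbatim once you adopt the same convention, so state the invariance as an assumption on the linearized problem rather than promising an argument that the grid redistribution is harmless.
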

The proof can be found in Appendix~\ref{sec:Proof of Theorem thm:main_result_sto}.

% \begin{remark}
%   Where there is no final state cost, i.e., $E = 0$, the cost function presents only the Lagrange term $\mathcal{L}(\delta)$ and all the $S_i$ matrices can be replaced by $P_i$ matrices. Moreover, when there is only the final state cost, i.e., $Q = 0$, the cost function presents only the Mayer term $\psi(\delta)$ and all the $S_i$ matrices can be replaced by $F_i$ matrices.
% \end{remark}
 % Main Result
  %!TEX root = ../switching_time_optimization.tex

%\section{Numerical Computations}\label{sec:Numerical Computations}

Regardless of the second-order optimization method employed, most of the numerical operations needed to compute $J(\delta), \nabla J(\delta)$ and $H_J(\delta)$ at each iteration are shared. Thus, it is necessary to perform them only once per solver iteration.

\subsection{State Propagation and Matrix Exponentials}
\label{sub:State Propagation and Matrix Exponentials}

We now define the auxiliary matrices needed for our computations:
% \begin{definition}[Cost-to-go matrices at any time instant]\label{def:matrices_P_F_S}
% We define the matrices $P_i^j, F_i^j$ and $S_i^j$ using Definition~\ref{def:matrices_P_F_S} with initial time $\tau_i^j$ instead of $\tau_j$ with $i=0,\dots,N+1$ and $j=0,\dots,n_i$. For notation ease, we also define $P_i \coloneqq P_i^0, F_i \coloneqq F_i^0, S_i \coloneqq S_i^0$ and $P_i^{n_i+1} \coloneqq P_{i+1}, F_i^{n_i+1} \coloneqq F_{i+1}, S_i^{n_i+1} \coloneqq S_{i+1}$.
% \todo[inline]{Remove this definition}
% \end{definition}
%
\begin{definition}[Auxiliary matrices]\label{def:auxiliary_matrices_sto}
  We define the matrix exponential of the linearized system between time instants $\tau_i^j$ and $\tau_{i}^{j+1}$ with $i=0,\dots,N$ and $j=0,\dots,n_i$ as
  \begin{equation}\label{eq:matcalE}
    \mathcal{E}_i^j \coloneqq e^{A_i^j\delta_{i}^j},
  \end{equation}
  Moreover, we define the matrices $M_i^j\in \mathbb{S}_{+}^{n_{x}}$ as
  \begin{equation}
    M_i^j \coloneqq \int_{0}^{\delta_i^j} e^{A_i^{j\top} \eta}  Q e^{A_i^j\eta} \mathrm{d}\eta.
  \end{equation}
  % We define $\mathcal{E}_i^{n_i+1} \coloneqq \mathcal{E}_{i+1}^{0}$ and  $M_i^{n_i+1} \coloneqq M_{i+1}^{0}$.
\end{definition}

Both these matrices can be computed with the following single matrix exponential
\ifTwoColumn
\begin{equation}\label{eq:Zexp_l}
  Z_i^j = e^{G_i^j\delta_i^j} \coloneqq \begin{bmatrix}Z_{i,1}^j & Z_{i,2}^j\\ 0 & Z_{i,3}^j\end{bmatrix},
\end{equation}
with $Z_{i,1}^{j}, Z_{i,2}^{j}, Z_{i,3}^{j}\in \mathbb{R}^{n_x\times n_x}$
\else
  \begin{equation}\label{eq:Zexp_l}
    Z_i^j = e^{G_i^j\delta_i^j} \coloneqq \begin{bmatrix}Z_{i,1}^j & Z_{i,2}^j\\ 0 & Z_{i,3}^j\end{bmatrix}, \quad \text{with}\quad Z_{i,1}^{j}, Z_{i,2}^{j}, Z_{i,3}^{j}\in \mathbb{R}^{n_x\times n_x}.
  \end{equation}
\fi
and matrices $G_i^j$ being defined as
\begin{equation}
  G_i^j \coloneqq \begin{bmatrix}-A_i^{j\top} & Q\\ 0 & A_i^j\end{bmatrix}, \quad \text{with}\quad G_{i}^j \in \mathbb{R}^{2n_x \times 2n_x}.
\end{equation}
After computing $Z_i^j$, matrices $\mathcal{E}_i^j$ and  $M_i^j$ can be obtained as
\begin{equation}\label{eq:Eij, Mij}
  \mathcal{E}_i^j = Z_{i,3}^j \quad \mathrm{and \quad} M_i^j = Z_{i,3}^{j\top} Z_{i,2}^j.
\end{equation}
For more details, see~\cite[Theorem 1]{VanLoan:1978ff}.

% \begin{theorem}\label{thm:matrix_exponential_integral}
%   Matrices $\mathcal{E}_i^j$ and $M_i^j$  with $j=0,\dots,n_i$ and $i=0,\dots,N$ can be computed from the exponential
%   \begin{equation}\label{eq:Zexp_l}
%     Z_i^j = e^{G_i^j\delta_i^j} \coloneqq \begin{bmatrix}Z_{i,1}^j & Z_{i,2}^j\\ 0 & Z_{i,3}^j\end{bmatrix}, \quad \text{with}\quad Z_{i,1}^{j}, Z_{i,2}^{j}, Z_{i,3}^{j}\in \mathbb{R}^{n_x\times n_x}
%   \end{equation}
%   with
%   \begin{equation}
%     G_i^j \coloneqq \begin{bmatrix}-A_i^{j\top} & Q\\ 0 & A_i^j\end{bmatrix}, \quad \text{with}\quad G_{i}^j \in \mathbb{R}^{2n_x \times 2n_x}  \end{equation}
%   as
%   \begin{equation}\label{eq:Eij, Mij}
%     \mathcal{E}_i^j = Z_{i,3}^j \quad \mathrm{and \quad} M_i^j = Z_{i,3}^{j\top} Z_{i,2}^j.
%   \end{equation}
% \end{theorem}
% \begin{proof}
%   See \cite[Theorem 1]{VanLoan:1978ff}.
% \end{proof}

In Algorithm~\ref{alg:linearize_compute_matrix_exponentials_propagate}
we describe the subroutine to propagate the state, linearize the dynamics and obtain matrices $\mathcal{E}_i^j$ and $M_i^j$.
\begin{algorithm}
\caption{Linearize, Compute Matrix Exponentials and Propagate}
\label{alg:linearize_compute_matrix_exponentials_propagate}
\begin{algorithmic}[1]
\Function{LinMatExpProp}{}
\For{$i=0,\dots,N$}
  \For{$j=0,\dots,n_i$}
  \State $A_{i}^j \gets$ Eq.~\eqref{eq:linearized_dynamics_sto1} \Comment{Linearize Dynamics}
  \State  $Z_i^j\gets $  Eq.~\eqref{eq:Zexp_l} \Comment{Matrix Exponential}
  \State $\mathcal{E}_i^j, M_i^j \gets$ Eq.~\eqref{eq:Eij, Mij}
  % \If{$j\neq n_i$} \Comment{Propagate State}
  \State $x_{i}^{j+1} \gets \mathcal{E}_i^j x_i^j$
  % \Else
    % \State $x_{i+1} \gets \mathcal{E}_i^j x_i^j$
  % \EndIf
\EndFor
\EndFor
\State \Return $x_i,\quad i=0,\dots,N+1$
\State \Return $M_i^j, \mathcal{E}_i^j,\quad j=0,\dots,n_i\quad i=0,\dots,N$
\EndFunction
\end{algorithmic}
\end{algorithm}
At every instant $\tau_i^j$ the dynamics are linearized, matrices $\mathcal{E}_i^j,M_i^j$ are computed and the state  $x_{i}^j$ is propagated. Note that as we described in Section~\ref{sub:Time Grid and Dynamics Linearization}, we consider $x_i^0=x_i$ and $x_i^{n_i+1}=x_{i+1}$.

There are several methods to compute the matrix exponential as discussed in~\cite{Moler78nineteendubious} and~\cite{Moler:2003fn}. In our work we use the ``Method 3'' in~\cite[Section 3]{Moler:2003fn} being the scaling and squaring method explained in detail in~\cite{Higham:2009hh} which is in the main linear algebra library of the Julia language. The scaling and squaring method is the most common method used for computing the matrix exponential because of its efficiency and precision. However, in the case of linear dynamics discussed in Section~\ref{sec:Linear Switched Systems}, the matrices $A_i^j$ are always constant and many operations can be precomputed increasing the speed of the algorithm.

\subsubsection{Exponential Integrators}
\label{subs:Exponential Integrators}

The matrix exponentials employed in this section are an implementation of the first-order forward Euler exponential integrator~\cite{Hochbruck:2010cd}.  Exponential integrators perform well in many cases of stiff systems. However, most common numerical methods for exponential integration reduce the operations required  by computing directly the product of a matrix exponential and a vector. In our case, however, we not only need to propagate the dynamics, but also to compute the cost function integral. Thus, we need to compute the matrix exponentials $Z_i^j$ which are then used to compute $\mathcal{E}_i^j$ and $M_i^j$ from~\eqref{eq:Zexp_l}.

\subsection{State Transition Matrices $\Phi$}
\label{sub:State Transition Matrices}

From Theorem~\ref{thm:main_result_sto} we need the state transition matrices between the switching instants. They can be computed recursively using Definition~\ref{def:state_evolution} which, combined to the definition of the matrix exponentials in~\eqref{eq:matcalE}, can be written as
\begin{equation}\label{eq:state_tran_recursion}
  \Phi(\tau_l, \tau_i) = \prod_{q=i}^{l-1}\prod_{p=0}^{n_q}\mathcal{E}_q^{p}.
\end{equation}
Note that we need to compute the state transition matrices the case when $l\geq i$ so that the transition goes forward in time.
% \begin{algorithm}
% \caption{Compute State Transition Matrices}
% \label{alg:compute_state_transition_matrices}
% \begin{algorithmic}[1]
% \Function{Compute$\Phi$}{}
% \For{$i=0,\dots,N+1$}
%   \State $\Phi(\tau_i,\tau_i) \gets I$
%   \For{$l=i,\dots,N$}
%   \State $\Phi(\tau_{l+1},\tau_i) \gets \Phi(\tau_{l},\tau_i)$
%     \For{$j=0,\dots,n_i$} \Comment{Iterate over grid points} \label{alg:phi_loop1}
%       \State $\Phi(\tau_{l+1}, \tau_i) \gets \mathcal{E}_i^j\Phi(\tau_{l+1}, \tau_i)$
%     \EndFor \label{alg:phi_loop2}
%   \EndFor
% \EndFor
% \State \Return $\Phi(\tau_l,\tau_i)\quad i=0,\dots,N+1,\quad l=i,\dots,N+1$
% \EndFunction
% \end{algorithmic}
% \end{algorithm}
% Note that we consider only the case when $l\geq i$ so that the transition goes forward in time. The innermost loop from line~\ref{alg:phi_loop1} to~\ref{alg:phi_loop2} iterates over the fixed grid points between each couple of $\tau_i$ and $\tau_{l}$.

\subsection{Matrices $S_i^j$}
\label{sub:Matrices S}

To obtain the cost function and its first and second derivatives we need to compute matrices $S_{i}$. Given the matrices $\mathcal{E}_i^j$ and $M_i^j$, matrices $S_{i}$ can be obtained with the following proposition
\begin{proposition}\label{prop:S_recursion}
  Matrix $S_i^j$ with $i=0,\dots,N+1$ and $j=0,\dots,n_i$ satisfy the following recursion
  \begin{align}
    S_{N} &= E \label{eq:recursion_S1}\\
    S_{i}^j &= M_i^j + \mathcal{E}_i^{j\top} S_{i}^{j+1}\mathcal{E}_i^j \label{eq:recursion_S2}.
  \end{align}
\end{proposition}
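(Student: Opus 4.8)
The plan is to prove the recursion directly from the definition $S_i^j = P_i^j + F_i^j$ in~\eqref{eq:S_sum_P_F}, treating the Lagrange part $P_i^j$ and the Mayer part $F_i^j$ separately, and then to read off the terminal value by evaluating the definitions at the final time. The single structural fact that drives the whole argument is the composition (semigroup) property of the state transition matrix: for any $t \geq \tau_i^{j+1}$,
\[
  \Phi(t,\tau_i^j) = \Phi(t,\tau_i^{j+1})\,\mathcal{E}_i^j .
\]
I would establish this first by inspecting the nested product in~\eqref{eq:state_trans_mat} and observing that the right-most factor generated by the sub-interval $[\tau_i^j,\tau_i^{j+1})$ is precisely $e^{A_i^j\delta_i^j} = \mathcal{E}_i^j$ from~\eqref{eq:matcalE}; deleting it leaves exactly the ordered product defining $\Phi(t,\tau_i^{j+1})$. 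Since $\mathcal{E}_i^j$ appears on the right, after transposition it appears on the left, so that $\Phi(t,\tau_i^j)^\top Q\,\Phi(t,\tau_i^j) = \mathcal{E}_i^{j\top}\big[\Phi(t,\tau_i^{j+1})^\top Q\,\Phi(t,\tau_i^{j+1})\big]\mathcal{E}_i^j$, with the analogous identity holding with $E$ in place of $Q$.

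For the Mayer part, $T_\delta \geq \tau_i^{j+1}$, so applying the factorization inside~\eqref{eq:def_F} and pulling the constant matrix $\mathcal{E}_i^j$ outside gives $F_i^j = \mathcal{E}_i^{j\top} F_i^{j+1}\mathcal{E}_i^j$ at once. For the Lagrange part I split the integral in~\eqref{eq:def_P} at $\tau_i^{j+1}$. On the tail $[\tau_i^{j+1},T_\delta]$ the same factorization applies, and $\mathcal{E}_i^{j\top},\mathcal{E}_i^j$ (being independent of the integration variable $t$) factor out of the integral, producing $\mathcal{E}_i^{j\top} P_i^{j+1}\mathcal{E}_i^j$. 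On the head $[\tau_i^j,\tau_i^{j+1})$ the transition matrix collapses to the single exponential $\Phi(t,\tau_i^j) = e^{A_i^j(t-\tau_i^j)}$, and the change of variables $\eta = t-\tau_i^j$ turns the integral into $\int_0^{\delta_i^j} e^{A_i^{j\top}\eta} Q\, e^{A_i^j\eta}\,\mathrm{d}\eta$, which is exactly $M_i^j$ from Definition~\ref{def:auxiliary_matrices_sto}. Hence $P_i^j = M_i^j + \mathcal{E}_i^{j\top} P_i^{j+1}\mathcal{E}_i^j$.

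Adding the two parts and collecting the common conjugation $\mathcal{E}_i^{j\top}(\cdot)\mathcal{E}_i^j$ yields
\[
  S_i^j = M_i^j + \mathcal{E}_i^{j\top}\big(P_i^{j+1}+F_i^{j+1}\big)\mathcal{E}_i^j = M_i^j + \mathcal{E}_i^{j\top} S_i^{j+1}\mathcal{E}_i^j,
\]
which is~\eqref{eq:recursion_S2}. The initialization~\eqref{eq:recursion_S1} then follows by evaluating Definition~\ref{def:matrices_P_F_S} at the terminal instant $\tau_{N+1}=T_\delta$: the integral defining $P_{N+1}$ is over the empty interval and vanishes, while $\Phi(T_\delta,\tau_{N+1}) = I$ forces $F_{N+1}=E$, so that $S_{N+1}=E$ initializes the backward sweep.

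I expect the only delicate step to be the first one: correctly extracting the single factor $\mathcal{E}_i^j$ from the ordered product in~\eqref{eq:state_trans_mat} and confirming it sits on the right, so that the transposes land symmetrically in the quadratic forms for both $P$ and $F$. Everything afterwards---splitting the interval, pulling a constant matrix through the integral, and the linear substitution identifying $M_i^j$---is routine.
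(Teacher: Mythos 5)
Your proposal is correct and takes essentially the same route as the paper's own proof: split the Lagrange integral at $\tau_i^{j+1}$, factor the transition matrix through its composition property $\Phi(t,\tau_i^j)=\Phi(t,\tau_i^{j+1})\mathcal{E}_i^j$, pull the constant factors out of the integral, and identify $M_i^j$ via the substitution $\eta = t-\tau_i^j$. The only differences are organizational---you run the recursions for $P_i^j$ and $F_i^j$ separately and then add, whereas the paper carries both terms through a single chain of equalities---and your terminal condition $S_{N+1}=E$ matches what the paper's proof actually establishes (its stated $S_N = E$ is a typo, since the proof sets $i=N+1$).
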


The proof is in Appendix~\ref{sec:Proof of Proposition prop:S_recursion}.

% Based on this recursion, we employ the subroutine in Algorithm~\ref{alg:compute_S_matrices} to compute matrices $S_i$.
% \begin{algorithm}
% \caption{Compute S Matrices}
% \label{alg:compute_S_matrices}
% \begin{algorithmic}[1]
% \Function{ComputeS}{}
% \State $S_{N+1}^0 \gets E$
% \For{$i=N,\dots,0$}
%   \State $S_i^{n_i+1} \gets S_{i+1}^0$
%   \For{$j=n_i,\dots,0$}
%     \State $S_{i}^j \gets M_i^j + \mathcal{E}_i^{j\top} S_{i}^{j+1} \mathcal{E}_i^j$ \Comment{Proposition~\ref{prop:S_recursion}}
%   \EndFor
%   % \State $S_i \gets S_i^0$
% \EndFor
% \State \Return $S_i,\quad i=0,\dots,N+1$
% \EndFunction
% \end{algorithmic}
% \end{algorithm}

Note that we are considering $S_i^0 = S_i$ and $S_i^{n_i+1} = S_{i+1}$ as discussed in Section~\ref{sub:Time Grid and Dynamics Linearization}.

\subsection{Complete Algorithm to Compute $J(\delta), \nabla J(\delta)$ and $H_J(\delta)$}
\label{sub:Complete Algorithm}

The complete algorithm to linearize problem~\eqref{eq:original_sto_problem} and compute the cost function, the gradient and the Hessian of~\eqref{eq:linearized_sto_problem} with respect to the switching intervals is shown in Algorithm~\ref{alg:complete_algorithm}.
\begin{algorithm}
\caption{Compute $J(\delta), \nabla J(\delta)$ and $H_J(\delta)$}
\label{alg:complete_algorithm}
\begin{algorithmic}[1]
\Function{ComputeCostFunctionAndDerivatives}{}
\Statex Shared Precomputations:
\State $x_i, \mathcal{E}_i^j, M_i^j \gets$ \Call{LinMatExpProp}{} \Comment{Algorithm~\ref{alg:linearize_compute_matrix_exponentials_propagate}}
\State $S_i \gets$ \Call{ComputeS}{} \Comment{Proposition~\ref{prop:S_recursion}}
\State $C_i \gets$ Eq.~\eqref{eq:C_matrix_sto} \Comment{Definition~\ref{def:C_matrix_sto}}
\State $\Phi(\tau_l,\tau_i) \gets$ \Call{Compute$\Phi$}{} \Comment{\eqref{eq:state_tran_recursion}}
\Statex Compute $J(\delta), \nabla J(\delta)$ and $H_J(\delta)$ \Comment{Theorem~\ref{thm:main_result_sto}}
\State $J(\delta)\gets$ Eq.~\eqref{eq:cost_function}
\State $\nabla J(\delta)\gets$ Eq.~\eqref{eq:gradient}
\State $H_J(\delta)\gets$ Eq.~\eqref{eq:hessian}
\EndFunction
\end{algorithmic}
\end{algorithm}
% Note that the equally spaced grid of $n_{\mathrm{grid}}$ points is rescaled at each iteration depending on the current $T_\delta$.
%
% \SO{I thought $T_\delta$ is fixed? Or may it vary within the optimization because the constraints might not be satisfied in between?}

After performing the shared precomputations, the cost function and its derivatives can be computed using Theorem~\ref{thm:main_result_sto} with no significant increase in computation to obtain also the Hessian in order to apply a second-order method.
 % Numerical Computations
  %!TEX root = ../switching_time_optimization.tex

\section{Linear Switched Systems}
\label{sec:Linear Switched Systems}

When the system has linear switched dynamics of the form
\begin{equation}\label{eq:linear_switched_dynamics}
  \dot{x}(t) = A_i x(t),\quad t\in [\tau_i,\tau_{i+1}),\quad i=0,\dots,N
\end{equation}
the computations can be greatly simplified. In the main Algorithm~\ref{alg:solve_sto_problem} there is no need to resort to an auxiliary problem with linearized dynamics. In this case the main result in Theorem~\ref{thm:main_result_sto} applies directly to the cost function and derivatives of the original problem~\eqref{eq:original_sto_problem}.

There is no need for a linearization grid when dealing with linear systems. Thus, we simplify all the results for nonlinear dynamics by removing the indices $j$ by setting $n_i=0$ with $i=0,\dots,N+1$.

Since the dynamics matrices do not change during the optimization, we precompute the matrices in $G_i=G_i^0$ in~\eqref{eq:Zexp_l} offline. In addition, if some of the $G_i$ are diagonalizable, they can be factorized offline as
\begin{equation}
  G_i = Y_i^\top \Lambda_i Y_i, \quad i=0,\dots, N,
\end{equation}
where $\Lambda_i$ are the diagonal matrices of eigenvalues and $Y_i$ are the nonsingular matrices of right eigenvectors. Thus, matrix exponentials~\eqref{eq:Zexp_l} can be computed online as simple scalar exponentials of the diagonal elements of $\Lambda_i$
\begin{equation}
  Z_i = Y_i^\top e^{\Lambda_i\delta_i} Y_i, \quad i=0,\dots, N,
\end{equation}
which corresponds to ``Method 14'' in~\cite[Section 6]{Moler78nineteendubious} and~\cite{Moler:2003fn}. Note that the scalar exponentials are independent and can be computed in parallel to minimize the computation times. If $G_i$ are not diagonalizable, we compute the matrix exponentials as in the nonlinear system case with the scaling and squaring method~\cite[Section 3]{Moler:2003fn}.

% \todo[inline]{Maybe change next small paragraph if we parallelize the code}
Further improvements in computational efficiency can be obtained in the case of linear dynamics by executing the main for loop in Algorithm~\ref{alg:linearize_compute_matrix_exponentials_propagate} in parallel since there is no need to propagate the state and iteratively linearize the system.

The computational improvements when dealing with linear systems are shown in the examples section.
  % Linear Systems
  %!TEX root = ../switching_time_optimization.tex

%PG: I moved this part to a subsection of the examples section

% Once installed, the package can be loaded as
% \begin{lstlisting}[language=Julia]
%   using SwitchTimeOpt  # Load package
%   using Ipopt          # Load IPOPT solver
% \end{lstlisting}

% \todo[inline]{Remove syntax part. The first example is enough to explain it.}
% We here hereby describe the main package syntax.
% In the case of linear dynamics, we need to create the problem:
% \begin{lstlisting}[language=Julia]
%   m = stoproblem(x0, A)
% \end{lstlisting}
% where variable \texttt{x0} corresponds to the initial state $x_0$ and variable \texttt{A} is a 3-dimensional matrix describing the linear dynamics matrices $A_i$ for each element of the third dimension.
%
% In the case of nonlinear dynamics the problem can be created as
% \begin{lstlisting}[language=Julia]
%   m = stoproblem(x0, nldyn, nldyn_deriv, U)
% \end{lstlisting}
% Function \texttt{nldyn} defines the nonlinear dynamics from~\eqref{eq:nonlin_system_dynamics} and  \texttt{nldyn\_deriv} the Jacobian from~\eqref{eq:jacobian_nonlinear_dynamics}. Matrix \texttt{U} defines the input vectors $u_i$ in~\eqref{eq:nonlin_system_dynamics} which are constant between each switching time and that define the switching behavior.
%
% Then the problem can be easily solved and the optimal switching times and intervals obtained as
% \begin{lstlisting}[language=Julia]
%   solve!(m)
%   tauopt = gettau(m)
%   deltaopt = getdelta(m)
% \end{lstlisting}
%
% For the complete documentation of the configurable options for defining problem~\eqref{eq:original_sto_problem} and the package functionalities we refer the reader to~\cite{SwitchTimeOpt}.
  % SwitchTimeOpt.jl
  %!TEX root = ../switching_time_optimization.tex

\section{Software and Examples}
\label{sec:Examples}

%\subsection{Julia Package \normalfont \texttt{SwitchTimeOpt}}
\label{sec:SwitchTimeOpt.jl}

All algorithms and examples described in this paper have been implemented in the open-source package \texttt{SwitchTimeOpt} in the Julia language, and are publicly available~\cite{SwitchTimeOpt}.
This package allows the user to easily define and efficiently solve switching time optimization problems for linear and nonlinear systems. \texttt{SwitchTimeOpt} supports a wide variety of nonlinear solvers through \texttt{MathProgBase} interface such as IPOPT~\cite{Wachter:2006hk} or KNITRO~\cite{Byrd:2006jv}.

For the complete documentation of the configurable options for defining problem~\eqref{eq:original_sto_problem} and the package functionalities we refer the reader to~\cite{SwitchTimeOpt}.

For each of the examples described in this section, we interfaced with the  \texttt{SwitchTimeOpt} IPOPT solver~\cite{Wachter:2006hk} on a late 2013 Macbook Pro with Intel Core i7 and 16GB of RAM.
All the examples are initialized with $\tau_i$ equally spaced between between $0$ and $T$. All the examples are solved with the default IPOPT options.

\subsection{Unstable Switched Dynamics}
\label{sub:Unstable Switched Dynamics}

Consider the switched system from~\cite{Caldwell:2012ei} described by the two unstable dynamics
\begin{equation}
  A_1 = \begin{bmatrix} -1 & 0\\ 1 & 2
\end{bmatrix}\quad \text{and}\quad A_2 = \begin{bmatrix}1 & 1\\ 1 & -2\end{bmatrix}.
\end{equation}
Note that $A_1$ and $A_2$ have no common eigenvectors. The system transitions happen $N=5$ times between $0$ and $T = 1$ according to the modes sequence ${\{ 1,2,1,2,1,2\}}$ and the cost function matrix is ${Q = I}$. The approach converges to precision $10^{-8}$ in roughly $\unit[3.5]{ms}$ producing the optimal switching times
\begin{equation*}
  \tau^* = \begin{bmatrix} 0.100 & 0.297 & 0.433 & 0.642 & 0.767\end{bmatrix}^\top
\end{equation*}
which correspond to the same solution obtained in~\cite{Caldwell:2012ei}. However, no timing is reported in that work.

To show the implementation ease of our software, we report in Listing~\ref{lst:listing_linear_example} the code of needed to produce this example:

\begin{lstlisting}[language=Julia, caption=\texttt{SwitchTimeOpt} code for the linear example., label=lst:listing_linear_example, float]
  N = 5  # Number of switching times

  # System dynamics
  A = zeros(nx, nx, N+1)
  A[:,:,1] = [-1 0; 1  2]
  A[:,:,2] = [ 1 1; 1 -2]
  for i = 3:N+1
    A[:,:,i] = A[:,:,mod(i+1,2)+1]
  end

  m = stoproblem(x0, A)  # Define problem
  solve!(m)              # Solve problem

  # Obtain results and timings
  tauopt = gettau(m)
  Jopt = getobjval(m)
  soltime = getsoltime(m)
\end{lstlisting}

% \SO{Here, for example, we have the sitution that we artificially include more modes (three of each real mode). Thus $N$ is not the number of distinct modes as stated in the problem formulation. The same holds for the other examples.}

\subsection{Lotka-Volterra Type Fishing Problem}
\label{sub:Lotka-Volterra Type Fishing Problem}

The Lotka-Volterra fishing problem has been studied for almost a century after D'Ancona and Volterra observed an unexpected decrease in fishing quota after World War I~\cite{Volterra:1926ws}. Lotka-Volterra systems present the nonlinear dynamics
\ifTwoColumn
\begin{equation}
   \dot{x}(t) \! = \!\begin{bmatrix}\dot{x}_1(t)\\ \dot{x}_2(t) \end{bmatrix} \!=\! \begin{bmatrix} x_1(t) - x_1(t)x_2(t) - c_1x_1(t)u(t)\\ -x_2(t) + x_1(t)x_2(t) - c_2x_2(t)u(t)\end{bmatrix}
\end{equation}
\else
\begin{equation}
   \dot{x}(t)  = \begin{bmatrix}\dot{x}_1(t)\\ \dot{x}_2(t) \end{bmatrix} = \begin{bmatrix} x_1(t) - x_1(t)x_2(t) - c_1x_1(t)u(t)\\ -x_2(t) + x_1(t)x_2(t) - c_2x_2(t)u(t)\end{bmatrix}
\end{equation}
\fi
defining the behavior of the biomass of the prey $x_1(t)$ assumed to grow exponentially and the predator $x_2(t)$ assumed to decrease exponentially. In addition, there is a coupling term describing the interaction of the biomasses when the predator eats the preys. The control action is the binary variable $u(t) \in \{ 0,1 \}$ consisting in the decision to fish $u(t)=1$ or not to fish $u(t)=0$ at time $t$. We choose $c_1=0.4$ and $c_2=0.2$ defining the number of preys and predators caught when fishing occurs.

This system has been analyzed from an integer optimal control point of view in~\cite{Sager:2006bt} and included in a library of standard integer optimal control benchmark problems for nonlinear systems in~\cite{Sager:2012ju}.

When no changes in the control action occur, i.e.\ we are either never fishing or always fishing, the system shows an oscillating behavior which can lead one of the biomasses to disappear~\cite{Sager:2006bt}, destroying the ecosystem. The goal is to responsibly fish in order to bring both the biomasses from an initial value of $x_0 = \begin{bmatrix}0.5 & 0.7\end{bmatrix}^\top$ to the steady state value $\begin{bmatrix}1 & 1\end{bmatrix}^\top$ within the time $T=12$. In other words, the optimal control problem consists in a tracking problem where we penalize the deviations from the reference  values $x_r(t) = \begin{bmatrix} 1 & 1\end{bmatrix}^\top$ by deciding when to start and stop fishing.

Given an integer input sequence $\{ u_i \}_{i=0}^{N},\; u_i \in \{ 0, 1\}$ and $N$ switching times $\tau_i$, the nonlinear dynamics can be described as a switched system of the form
\ifTwoColumn
\begin{equation}
  \dot{x}(t) \!=\! f_i(x(t)) \!=\! \begin{bmatrix} x_0(t) - x_0(t)x_1(t) - c_0x_0(t)u_i\\ -x_1(t) + x_0(t)x_1(t) - c_1x_1(t)u_i\end{bmatrix},
\end{equation}
with $t\in[\tau_{i}, \tau_{i+1}),\; i=0,\dots,N$.
\else
\begin{equation}
  \dot{x}(t) = f_i(x(t)) = \begin{bmatrix} x_0(t) - x_0(t)x_1(t) - c_0x_0(t)u_i\\ -x_1(t) + x_0(t)x_1(t) - c_1x_1(t)u_i\end{bmatrix}, \quad t\in[\tau_{i}, \tau_{i+1}),\; i=0,\dots,N.
\end{equation}
\fi
The complete optimal control problem can be written as
\ifTwoColumn
  \begin{equation}\label{eq:fishin_problem}
  \begin{aligned}
  &\underset{\delta}{\text{minimize}} &&\!\int_{0}^{T_\delta} \|x(t) - x_r(t)\|_2^2 \mathrm{d}t \\
  &\text{subject to} && \!\dot{x}(t) = f_i(x(t)),\; t\in[\tau_{i}, \tau_{i+1}),\; i=0,\dots,N\\
  &&& x(0)=x_0\\
  &&& \delta = \Delta.
  \end{aligned}
  \end{equation}
\else
  \begin{equation}\label{eq:fishin_problem}
  \begin{aligned}
  &\underset{\delta}{\text{minimize}} && \int_{0}^{T_\delta} \|x(t) - x_r(t)\|_2^2 \mathrm{d}t \\
  &\text{subject to} && \dot{x}(t) = f_i(x(t)),\quad t\in[\tau_{i}, \tau_{i+1}),\; i=0,\dots,N\\
  &&& x(0)=x_0\\
  &&& \delta = \Delta.
  \end{aligned}
  \end{equation}
\fi

The problem can be easily brought into the state-regulation form~\eqref{eq:original_sto_problem} by augmenting the state with $x_r(t)$ with $\dot{x}_r(t) = 0$ and minimizing the deviations between $x(t)$ and $x_r(t)$.

We consider a sequence of $N=8$ switchings between the two possible input values $\{u_i\}_{i=0}^{N} = \{0,1,0,1,0,1,0,1,0\}$ giving a total of $9$ dynamics.

We run the algorithm for $20$ iterations for increasing number of fixed-grid points $100, 150, 200$ and $250$. The optimal switching times for $n_{\mathrm{grid}} = 200$ are
\begin{equation}
  \tau^* = \begin{bmatrix} 2.446 & 4.150 & 4.533 & 4.799 & 5.436 & 5.616 & 6.969 & 7.033 \end{bmatrix}^\top,
\end{equation}
and the state behavior is displayed in Figure~\ref{fig:fish_problem}. The linearized system is also plotted as a dot-dashed green line showing an almost indistinguishable curve.
% Note that, as explained in Section~\ref{sec:Problem Statement}, the first two switching times collapsed, i.e., $\delta_1=0$. In this way the algorithm has changed the order of the modes without resorting to integer optimization procedure.

The complete results are shown in Table~\ref{tab:fishing_problem}. The system is simulated at the optimal intervals $\delta^*$ with an ode45 integrator obtaining the cost function value $J_{\mathrm{ode45}}(\delta^*)$ and with the grid linearizations obtaining $J(\delta^*)$ -- their values converge as the number of grid points increases. The latter can be seen from the value of $\Delta J = \|J_{\mathrm{ode45}}(\delta^*) - J(\delta^*)\|/\|J_{\mathrm{ode45}}(\delta^*)\|$ which decreases as the grid becomes finer. The number of cost function evaluations $n_{J, \mathrm{eval}}$ and the computation time are also shown in Table~\ref{tab:fishing_problem}. For the chosen solver IPOPT, increasing the number of grid points does not necessarily mean a higher computation time, because the latter is strictly related to the number of cost function evaluations which varies depending on the line search steps. We notice that, as the grid becomes finer, i.e.\ from $n_{\mathrm{grid}}=100$ to $150$, the linear approximation is more precise and the number of line search steps required is lower.
% For example, the option of $n_{\mathrm{grid}} = 200$ gives $\unit[1.25]{sec}$ time with $60$ cost function evaluations which is lower than the case with $n_{\mathrm{grid}} = 150$ giving $\unit[3.16]{sec}$ time with $147$ cost function evaluations.

% The cost function behavior shown in Figure~\ref{fig:Jfish_problem} justifies the choice of $25$ iterations since there is no significant improvement after the $15$-th iteration in each case. Interestingly, the convergence for $n_{\mathrm{grid}}=50$ is the fastest per iteration. However, if we look at the number of cost function evaluations $n_{J, \mathrm{eval}}$ for $n_{\mathrm{grid}}=50$ in Table~\ref{tab:fishing_problem}, they are the highest increasing the computation time compared to the cases with other grid points.
\begin{figure}
  \centering
  \ifTwoColumn
  \includegraphics[width=\columnwidth]{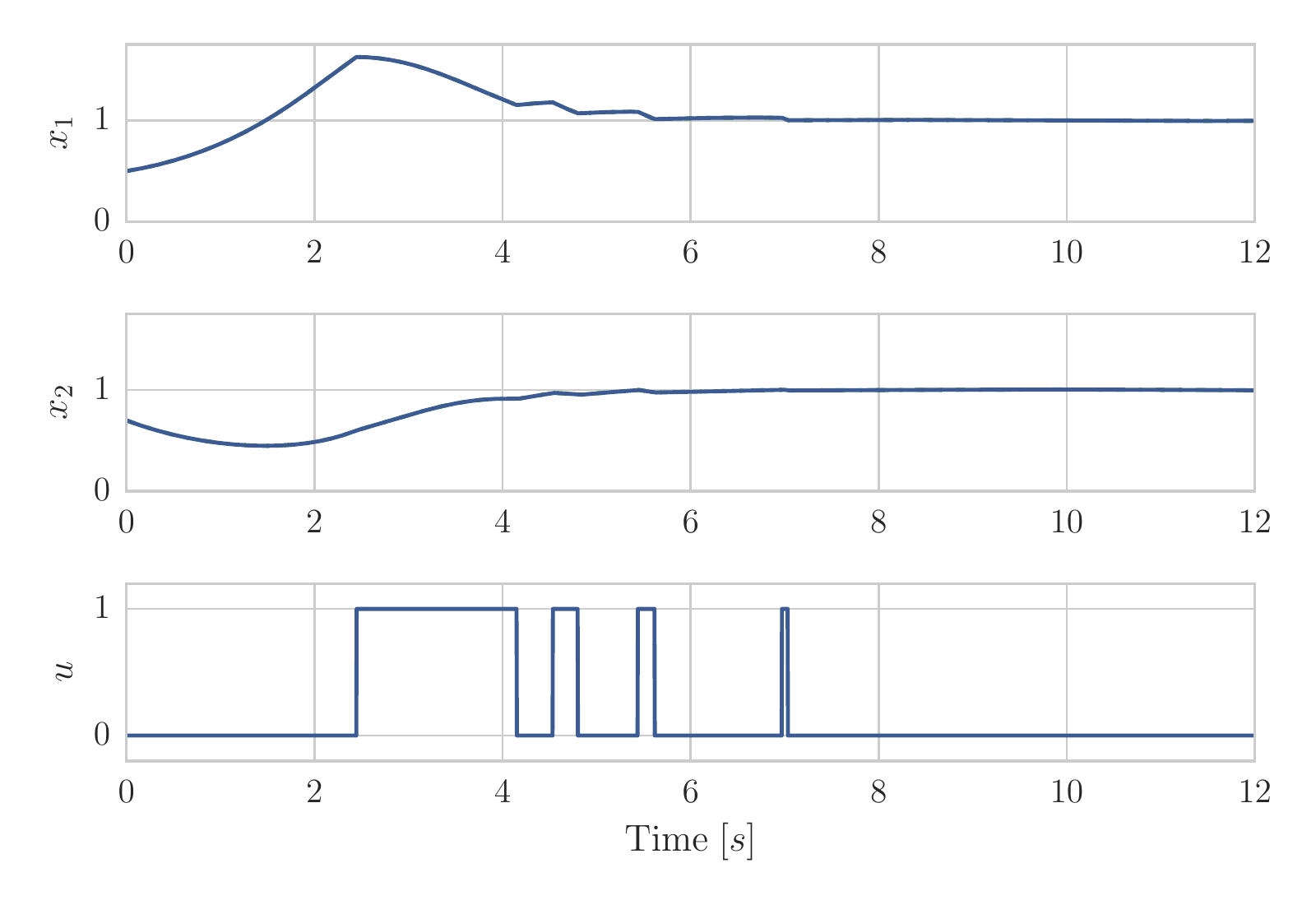}
  \else
  \includegraphics[width=0.8\columnwidth]{img/fishing_problem.pdf}
\fi
  \caption{Fishing problem. States and input behaviors at the optimal switching times~$\tau^*$. The states of the simulated nonlinear system (blue line) and the linearized system (dot-dashed green line) show a very close match.}
  \label{fig:fish_problem}
\end{figure}
\begin{table}
  \centering
    \caption{Results for Lotka-Volterra fishing problem after 20 iterations.}
    \label{tab:fishing_problem}
\begin{tabular}{rlllll}
\toprule
% $n_{\mathrm{grid}}$ & $J_{\mathrm{ode45}}(\delta^*)$ & $J(\delta^*)$ & $|J_{\mathrm{ode45}}(\delta^*) - J(\delta^*)|$ & $n_{J,\mathrm{eval}}$ & $\mathrm{Time}\; [s]$\\
$n_{\mathrm{grid}}$ & $J_{\mathrm{ode45}}(\delta^*)$ & $J(\delta^*)$ & $\Delta J\; [\%]$ & $n_{J,\mathrm{eval}}$ & $\mathrm{Time}\; [s]$\\
 \midrule
 100 & 1.3500 & 1.3508 & 0.065 & 177 & 0.65\\
 150 & 1.3454 & 1.3459 & 0.033 & 56 & 0.27\\
 200 & 1.3456 & 1.3459 & 0.016 & 51 & 0.29\\
 250 & 1.3454 & 1.3455 & 0.010 & 54 & 0.38\\
     \bottomrule
  \end{tabular}
\end{table}

% \begin{figure}
%   \centering
%   \ifTwoColumn
%   \includegraphics[width=0.9\columnwidth]{img/Jfishing_problem.pdf}
%   \else
%   \includegraphics[width=0.8\columnwidth]{img/Jfishing_problem.pdf}
% \fi
%   \caption{Fishing problem. Cost function behavior with different number of grid points.}
%   \label{fig:Jfish_problem}
% \end{figure}

% We introduced a grid of $n_{\mathrm{grid}}=25$ points. As a termination condition, we set the objective value changes to be less than $10^{-2}$. The algorithm converges in $15$ iterations in approximately $\unit[150]{ms}$ to the switching times
% \begin{equation}
%   \tau^* = \begin{bmatrix} 0.986 & 0.986 & 2.436 & 4.23 & 4.933 & 5.34 & 7.043 & 7.123 \end{bmatrix}^\top
% \end{equation}
% with optimal cost function $1.357$. The behavior of the simulated system with the optimal switching times using ode45 numerical integrator is shown in Figure~\ref{fig:fish_problem}.
% The linearized system is also plotted as a dot-dashed green line showing an almost indistinguishable curve. The integrated cost function is $1.350$ showing a small mismatch with the one obtained from the linearized system. Note that the first two switching times collapsed, i.e., $\delta_1=0$. In this way the algorithm has changed the order of the modes without resorting to integer optimization procedure.

Our results are very close to the solutions in~\cite{Sager:2006bt} which are obtained with multiple shooting approach discretizing the problem a priori in $60$ time instants leading to a mixed-integer optimization problem with $2^{60}$ possible input combinations. In~\cite{Sager:2006bt} the authors deal with the required computational complexity by applying several heuristics. Their best cost function value is $1.3451$ and is obtained after solving the integer optimal control problem, applying a sum-up-rounding heuristic defined in~\cite{Sager:2006bt} as Rounding 2 and using the result to solve a switching time optimization problem with multiple shooting. Even though no timings are provided in~\cite{Sager:2006bt}, timing benchmarks for the multiple shooting approach applied to this problem are provided in the report~\cite[Section 5.5]{Sager:2011wu} where the execution times are approximately 10 times slower than the ones obtained in this work. Note that the implementations in~\cite{Sager:2006bt} and~\cite{Sager:2011wu} use the software package MUSCOD-II~\cite{Hoffmann:2011wy} which is a optimized C++ implementation of the multiple shooting methods, while our approach has been implemented on the high-level language Julia.

\subsection{Double-Tank System}
\label{sub:Double-Tank System}

The problem of controlling two interconnected tanks using hybrid control appeared in~\cite{Malmborg:1997tb}. The authors of~\cite{Axelsson:2005dd} applied switching time optimization to obtain the optimal inputs. This example has also been used in~\cite{Vasudevan:2013is} and~\cite{Claeys:2016cz} for relaxations in switched control systems.

The system dynamics can be written in the form
\begin{equation}\label{eq:double_tank}
  \dot{x}(t) = \begin{bmatrix}\dot{x}_1(t)\\\dot{x}_2(t)\end{bmatrix} = \begin{bmatrix}-\sqrt{x_1(t)} + u(t)\\
 \sqrt{x_1(t)} - \sqrt{x_2(t)} \end{bmatrix},
\end{equation}
where $x_1$ and $x_2$ are the fluid levels in the upper and lower tanks respectively. The control action $u(t)$ is the flow into the upper tank which is linked to the valve opening. We assume the input to be either $u_{\mathrm{min}}$ or $u_{\mathrm{max}}$. The goal of the control problem is tracking the reference level $x_r(t) = 3 -0.5t$ with the second tank (the tank is slowly emptying) over the time window from $0$ to $T$. The initial state is $x_0 = \begin{bmatrix}2 & 2\end{bmatrix}^\top$.

The optimal switching times problem has the same form as~\eqref{eq:fishin_problem}, but in this case the reference varies over time. We can bring the problem into state-regulation form by augmenting the state with $x_r(t)$ such that $\dot{x}_r(t) = -0.5$ and $x_r(0) = 3$ and minimizing deviations between $x_2(t)$ and $x_r(t)$.

We consider a sequence of $N=15$ switchings between $u_{\mathrm{min}}=2$ and $u_{\mathrm{max}}=3$ input values giving a total of $16$ dynamics.

We run the algorithm for $15$ iterations for increasing number of grid points $10,30,50$ and $100$. The optimal switching times for $n_{\mathrm{grid}} = 10$ are
\ifTwoColumn
  \begin{equation}
    \begin{multlined}[t][.85\columnwidth]\tau^* = [\begin{matrix}0.0 & 4.18 & 4.92 & 4.93 & 5.57 & 6.12 & 6.48 & 6.9 \end{matrix}\\
      \begin{matrix}7.26 & 7.67 & 8.04 & 8.43 & 8.81 & 9.19 & 9.56\end{matrix}]^\top.\end{multlined}
  \end{equation}
  \else
  \begin{equation}
    \tau^* = \begin{bmatrix}0.0 & 4.18 & 4.92 & 4.93 & 5.57 & 6.12 & 6.48 & 6.9 & 7.26 & 7.67 & 8.04 & 8.43 & 8.81 & 9.19 & 9.56\end{bmatrix}^\top.
  \end{equation}
  \fi

The behavior of the water levels is simulated with an ode45 integrator and displayed together with the valve opening in Figure~\ref{fig:double_tank}. The linearized system states behavior is plotted as a dot-dashed green line which coincides with the result from the nonlinear integrator. The dotted black line in the second plot represents the reference water level to be tracked.
\begin{figure}
  \centering
  \ifTwoColumn
  \includegraphics[width=\columnwidth]{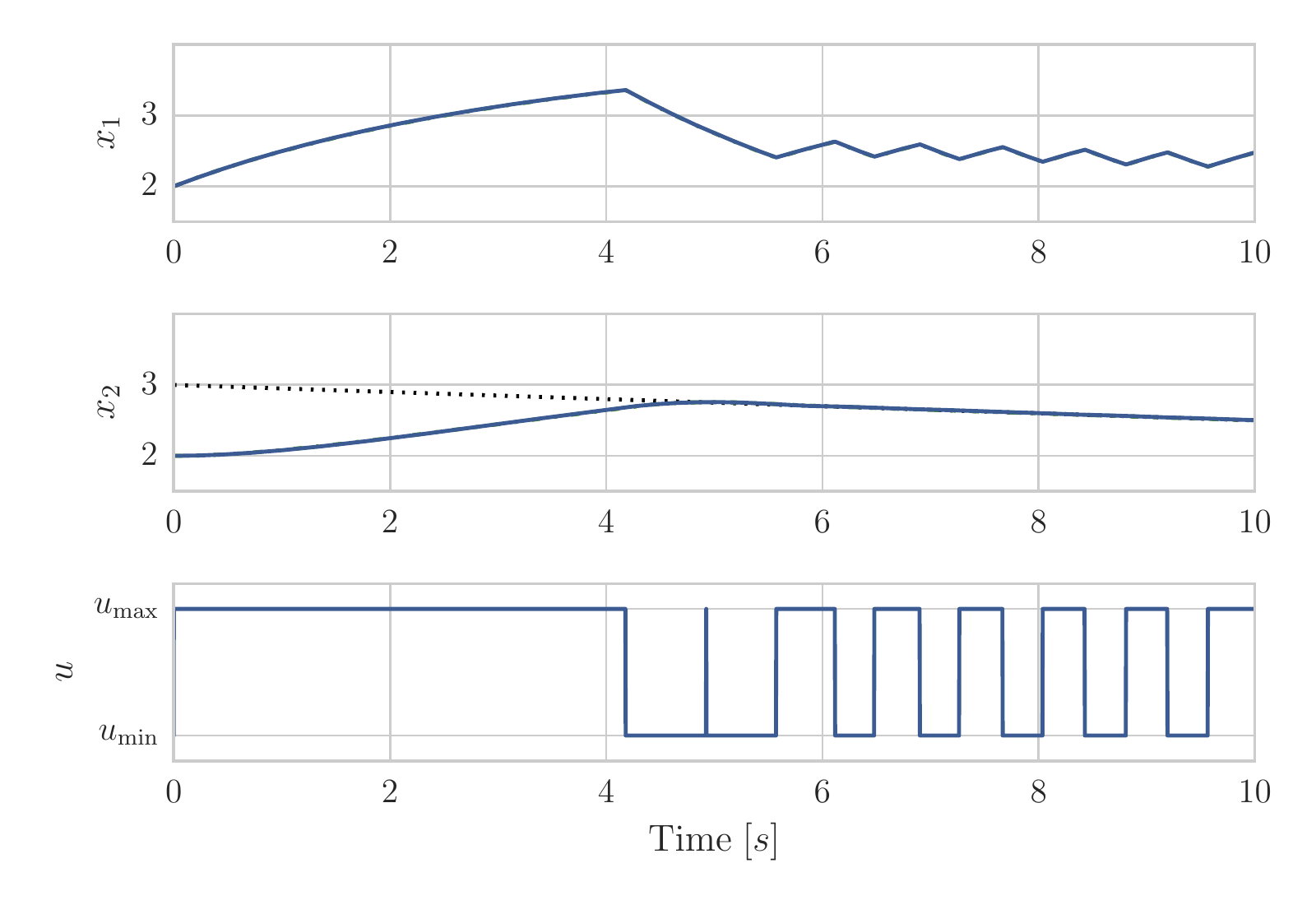}
  \else
  \includegraphics[width=0.8\columnwidth]{img/tank_problem.pdf}
\fi
  \caption{Double-tank system. States and input behaviors at the optimal switching times~$\tau^*$. The states of the simulated nonlinear system (blue line) and the linearized system (dot-dashed green line) show a very close match. The dotted black line in the second plot defines the reference to be tracked.}
  \label{fig:double_tank}
\end{figure}

The complete results are shown in Table~\ref{tab:tank_problem}. The nonlinear system is simulated at the optimal intervals $\delta^*$ obtaining the cost function $J_{\mathrm{ode45}}(\delta^*)$ and with the grid linearizations giving $J(\delta^*)$. As the shown in the table, the normalized absolute value of their difference tends to $0$ as the number of grid points increases. Even if the number of objective function evaluations is not monotonically increasing in the number of fixed grid points, we see an increasing execution time due to the required computations.
\begin{table}
  \centering
    \caption{Results for Double-tank problem after 15 iterations.}
    \label{tab:tank_problem}
\begin{tabular}{rlllll}
\toprule
% $n_{\mathrm{grid}}$ & $J_{\mathrm{ode45}}(\delta^*)$ & $J(\delta^*)$ & $|J_{\mathrm{ode45}}(\delta^*) - J(\delta^*)|$ & $n_{J,\mathrm{eval}}$ & $\mathrm{Time}\; [s]$\\
$n_{\mathrm{grid}}$ & $J_{\mathrm{ode45}}(\delta^*)$ & $J(\delta^*)$ & $\Delta J\; [\%]$ & $n_{J,\mathrm{eval}}$ & $\mathrm{Time}\; [s]$\\
 \midrule
 10 & 1.8595 & 1.8495 & 0.537 & 39 & 0.05\\
 30 & 1.8582 & 1.8573 & 0.049 & 39 & 0.09\\
 50 & 1.8582 & 1.8578 & 0.021 & 49 & 0.11\\
 100 & 1.8582 & 1.8580 & 0.010 & 33 & 0.12\\
     \bottomrule
  \end{tabular}
\end{table}

% The cost function behavior is shown in Figure~\ref{fig:Jtank_problem} where for each choice of grid points, the cost function does not show a considerable improvement after the $6$-th iteration.
% \begin{figure}
%   \centering
%   \ifTwoColumn
%   \includegraphics[width=0.9\columnwidth]{img/Jtank_problem.pdf}
%   \else
%   \includegraphics[width=0.8\columnwidth]{img/Jtank_problem.pdf}
% \fi
%   \caption{Double-tank problem. Cost function behavior with different number of grid points.}
%   \label{fig:Jtank_problem}
% \end{figure}

Even though~\cite{Axelsson:2005dd} does not report computation times, in~\cite[Section 5.2]{Vasudevan:2013is} the authors report execution times in the order of $\unit[30]{sec}$ on an Intel Xeon, 12 core, 3.47 GHz, 92 GB RAM. Our approach is approximately $200$ to $550$ times faster on a standard laptop. Moreover, the problem described here is slightly more general since the reference is time-varying.
% However, it must be said that the algorithm~\cite[Section 5.2]{Vasudevan:2013is} is not particularly efficient and we expect other algorithms such as multiple shooting methods to perform better.

%
%
% The fixed grid has $n_{\mathrm{grid}} =10$ points. As a termination condition we set the objective value changes to be less than $10^{-2}$.
%
% The algorithm converges in $9$ iterations in $\unit[21]{ms}$ to the locally optimal solution
% \begin{equation}
%   \tau^* = \begin{bmatrix}0.0 & 4.12 & 4.56 & 4.62 & 5.21 & 5.42 & 5.91 & 6.36 & 6.75 & 7.25 & 7.67 & 8.15 & 8.61 & 9.07 & 9.53\end{bmatrix}^\top.
% \end{equation}
% The optimal objective function is $3.699$  while the simulated one using ode45 integrator is $3.719$.
%
% \begin{figure}
%   \centering
%   \ifTwoColumn
%   \includegraphics[width=0.8\columnwidth]{img/tank_problem.pdf}
%   \else
%   \includegraphics[width=0.7\columnwidth]{img/tank_problem.pdf}
% \fi
%   \caption{Double-tank system. States and input behaviors at the optimal switching times~$\tau^*$. The states of the simulated nonlinear system (blue line) and the linearized system (dot-dashed green line) show a very close match.}
%   \label{fig:double_tank}
% \end{figure}

% \BS{Do grid refinement also here}
%
% Also in this case the approach presents considerable speedups \BSshort{Cite slow paper with care} with respect to mixed-integer programming methods which suffer from a combinatorial number of combinations to consider.
  % Numerical Examples
  %!TEX root = ../switching_time_optimization.tex

\section{Conclusion}
\label{sec:Conclusion}

We presented a novel method for computing the optimal switching times for linear and nonlinear switched systems.
By reformulating the problem with the switching intervals as optimization variables, we derive efficiently computable expressions for the cost function, the gradient and the Hessian which share the most expensive computations. Once the cost function value is obtained, at each iteration of the optimization algorithm, there is no significant increase in complexity in computing the gradient and the Hessian. In addition, we showed that in the case of linear dynamics many operations can be performed offline and many online operations parallelized greatly reducing the computation times.

We implemented our method in the open-source Julia package \texttt{SwitchTimeOpt} which allows the user to quickly define and solve optimal switching time problems. An example with linear dynamics shows that our method can solve switching time optimization problems in milliseconds time scale. We also show with two nonlinear dynamics examples that our high-level Julia implementation can solve these problems with one up to two orders of magnitude improvements over state-of-the-art approaches.

There are several future directions to be investigated. First of all,  many computations can be parallelized. The state transition matrices in~\eqref{eq:state_tran_recursion} can be computed in parallel for every different $\tau_i$. Moreover, given the associative nature of the matrix products, parallel reduction techniques like the prefix-sum~\cite{ladner1980parallel} could be implemented to reduce the computation time. In the case of linear dynamics, since there is no need to sequentially propagate the state before the linearizations, the matrix exponentials can be computed completely in parallel.  Note that Julia language already includes several functions to parallelize computations on standard CPUs. However, we believe that instead our approach could greatly benefit from implementations of these parallelizations on CUDA or FPGA architectures.
Another research direction could be to develop a tailored solver to our method to exploit its structure. For example, interior point methods such as IPOPT, exploit line search routines which could end up evaluating the cost function several times increasing the computation time due to the matrix exponentials computations at each different point. An optimization algorithm taking into account the most expensive computations in our subroutines, could definitely increase the performance. Finally, the current work could be extended to more general problem formulations such as optimal control with state constraints.
 % Conclusions

  % \ifTwoColumn \else         % Double column mode
  \ifTechReport \appendix % \gdef\thesection{Appendix \Alph{section}}   % Tech report mode
  \else \appendices \fi  % Draft mode

  %!TEX root = ../switching_time_optimization.tex

\section{Proof of Theorem~\ref{thm:main_result_sto}}
\label{sec:Proof of Theorem thm:main_result_sto}

In order to prove the main result, we first require the following two lemmas:

\begin{lemma}[State transition matrix derivative] \label{lem:state_trans_mat_deriv} Given two switching times $\tau_a$, $\tau_{i+1}$  with $\tau_a\leq \tau_{i+1}$ such  that $\tau_{i+1}$ does not coincide with any point of the background grid and the switching interval $\delta_i$, the first derivative of the state transition matrix between $\tau_a$ and $\tau_{i+1}$ with respect to $\delta_i$ can be written as
  \begin{equation}
    \frac{\partial \Phi(\tau_{i+1},\tau_a)}{\partial \delta_i} = A_{i}^{n_{i}}\Phi(\tau_{i+1},\tau_a).
  \end{equation}
\end{lemma}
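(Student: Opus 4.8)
The plan is to differentiate the product-of-exponentials representation of $\Phi(\tau_{i+1},\tau_a)$ factor by factor, after first pinning down exactly which factor carries the dependence on $\delta_i$. Using the simplified transition formula from Definition~\ref{def:state_evolution} together with the matrix-exponential notation $\mathcal{E}_q^p=e^{A_q^p\delta_q^p}$, I would write
\begin{equation*}
  \Phi(\tau_{i+1},\tau_a) = \prod_{q=a}^{i}\prod_{p=0}^{n_q}\mathcal{E}_q^p = \mathcal{E}_i^{n_i}\Bigg(\prod_{p=0}^{n_i-1}\mathcal{E}_i^p\Bigg)\prod_{q=a}^{i-1}\prod_{p=0}^{n_q}\mathcal{E}_q^p,
\end{equation*}
where the ordering convention places the latest-time factor $\mathcal{E}_i^{n_i}=e^{A_i^{n_i}\delta_i^{n_i}}$ on the left. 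The goal is then to show that the only sub-interval depending on $\delta_i$ is the last one, $\delta_i^{n_i}$.

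First I would invoke the hypothesis that $\tau_{i+1}$ does not land on a background grid point: for a sufficiently small perturbation of $\delta_i$ (holding all other intervals, hence $\tau_i$, fixed) the number of interior grid points $n_i$ and the grid points $\tau_i^1,\dots,\tau_i^{n_i}$ themselves remain unchanged. Since $\tau_i^k=\tau_i+\sum_{j=0}^{k-1}\delta_i^j$ with $\tau_i$ and each $\tau_i^k$ fixed, the sub-intervals $\delta_i^0,\dots,\delta_i^{n_i-1}$ are differences of fixed quantities and hence constant, while $\delta_i^{n_i}=\tau_{i+1}-\tau_i^{n_i}$ satisfies $\partial\delta_i^{n_i}/\partial\delta_i=1$. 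The factors with $q<i$ depend only on earlier intervals and are likewise independent of $\delta_i$.

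Consequently all dependence on $\delta_i$ sits in the single leftmost factor $\mathcal{E}_i^{n_i}$, and I would compute
\begin{equation*}
  \frac{\partial \mathcal{E}_i^{n_i}}{\partial \delta_i} = \frac{\partial}{\partial\delta_i}e^{A_i^{n_i}\delta_i^{n_i}} = A_i^{n_i}\,e^{A_i^{n_i}\delta_i^{n_i}},
\end{equation*}
using that $A_i^{n_i}$ commutes with its own exponential so that the derivative factor can be pulled to the left. Multiplying by the remaining ($\delta_i$-independent) product reassembles $\Phi(\tau_{i+1},\tau_a)$ and yields $A_i^{n_i}\Phi(\tau_{i+1},\tau_a)$, as claimed.

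The main obstacle is bookkeeping rather than analysis: one must respect the non-commutativity of the matrix product and verify that the $\delta_i$-dependent exponential really is the leftmost factor, so that $A_i^{n_i}$ emerges on the left in exactly the position demanded by the statement. The only genuine analytic subtlety is the differentiability of $\Phi$ with respect to $\delta_i$, which is precisely what the ``no grid point coincidence'' assumption secures by freezing the combinatorial structure (the value of $n_i$ and the interior grid points) under small perturbations.
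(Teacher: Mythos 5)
Your proposal is correct and follows essentially the same route as the paper: both isolate the leftmost exponential factor $e^{A_i^{n_i}\delta_i^{n_i}}$ as the only $\delta_i$-dependent term (the paper writes $\delta_i^{n_i}=\delta_i-\sum_{p=0}^{n_i-1}\delta_i^p$ explicitly, you equivalently note $\partial\delta_i^{n_i}/\partial\delta_i=1$ with the earlier sub-intervals frozen), differentiate it to pull $A_i^{n_i}$ out on the left, and reassemble $\Phi(\tau_{i+1},\tau_a)$. Your explicit justification that the no-grid-coincidence hypothesis fixes $n_i$ and the interior grid points under small perturbations is a slightly more careful rendering of what the paper leaves implicit in its remark following the lemma.
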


Note that in the case when $\tau_{i+1}$ coincides with a fixed-grid point, the derivative is not defined since at $\tau_{i+1} + \epsilon$ a new linearization is introduced breaking the smoothness of the state transition matrix. Our derivations still hold in that case by considering, instead of the gradient, the subgradient equal to the one-sided limit of the derivative from below.

\begin{proof}
  We can rewrite $\Phi(\tau_{i+1},\tau_a)$ using Definition~\ref{def:state_evolution} as
  \begin{equation}\label{eq:state_trans_deriv_proof1}
    \Phi(\tau_{i+1}, \tau_a) =  e^{A_i^{n_i}\left(\delta_{i} - \sum_{p = 0}^{n_{i}-1}\delta_{i}^{p}\right)} \left(\prod_{p=0}^{n_{i}-1}e^{A_{i}^p\delta_{i}^p}\right) \Phi(\tau_{i},\tau_a),
  \end{equation}
  by using the relation
  \begin{equation}
    \delta_{i} = \sum_{j = 0}^{n_i}\delta_{i}^{j}.
  \end{equation}
  Taking the derivative of~\eqref{eq:state_trans_deriv_proof1} we obtain:
  \ifTwoColumn
  \begin{equation}
    \begin{aligned}
    &\frac{\partial \Phi(\tau_{i+1},\tau_a)}{\partial \delta_i} =\\ &= A_i^{n_i}e^{A_i^{n_i}\left(\delta_{i} - \sum_{p = 0}^{n_{i}-1}\delta_{i}^{p}\right)} \left(\prod_{p=0}^{n_{i}-1}e^{A_{i}^p\delta_{i}^p}\right) \Phi(\tau_{i},\tau_a)\\
      &= A_i^{n_i} \Phi(\tau_{i+1},\tau_a),
    \end{aligned}
  \end{equation}
  \else
  \begin{equation}
    \begin{aligned}
      \frac{\partial \Phi(\tau_{i+1},\tau_a)}{\partial \delta_i} &= A_i^{n_i}e^{A_i^{n_i}\left(\delta_{i} - \sum_{p = 0}^{n_{i}-1}\delta_{i}^{p}\right)} \left(\prod_{p=0}^{n_{i}-1}e^{A_{i}^p\delta_{i}^p}\right) \Phi(\tau_{i},\tau_a)\\
      &= A_i^{n_i} \Phi(\tau_{i+1},\tau_a),
    \end{aligned}
  \end{equation}
  \fi
  where we made use of the properties of the matrix exponential $e^{X(a+b)} = e^{Xa} + e^{Xb}$ and $\frac{\partial}{\partial c} e^{Xc} = X e^{Xc}$ with $X\in\mathbb{R}^{n_x}$ and $a,b,t \in \mathbb{R}$.
\end{proof}

The matrices $S_i$ and their first derivatives play an important role in the proof and in the rest of the paper. We here derive the first derivative of $S_i$.

\begin{lemma}[Derivative of Matrices $S_i$]\label{lem:deriv_S}
  Given the switching times $\tau_a$ and $\tau_{i+1}$ so that $\tau_a \leq \tau_{i+1}$ and that $\tau_{i+1}$ does not coincide with any point of the background grid and the interval $\delta_i$, the derivative of $S_a$ with respect to $\delta_i$ is
  \begin{equation}
    \frac{\partial S_{a}}{\partial \delta_i} = \Phi(\tau_{i+1},\tau_a)^\top C_{i} \Phi(\tau_{i+1},\tau_a)
  \end{equation}
\end{lemma}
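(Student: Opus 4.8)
The plan is to use the decomposition $S_a = P_a + F_a$ from Definition~\ref{def:matrices_P_F_S} and differentiate each term separately, reducing everything to the single state-transition derivative supplied by Lemma~\ref{lem:state_trans_mat_deriv}. Writing $\Phi_a := \Phi(\tau_{i+1},\tau_a)$ for brevity, the first step is to factor this matrix out of both $P_a$ and $F_a$. For the Mayer part, the semigroup property $\Phi(T_\delta,\tau_a) = \Phi(T_\delta,\tau_{i+1})\Phi_a$ gives $F_a = \Phi_a^\top F_{i+1}\Phi_a$ with $F_{i+1} = \Phi(T_\delta,\tau_{i+1})^\top E\,\Phi(T_\delta,\tau_{i+1})$. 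For the Lagrange part I would split the integral defining $P_a$ at $\tau_{i+1}$: the tail $\int_{\tau_{i+1}}^{T_\delta}$ factors, again by the semigroup property, as $\Phi_a^\top P_{i+1}\Phi_a$, leaving a head term $P_a^{<} := \int_{\tau_a}^{\tau_{i+1}}\Phi(t,\tau_a)^\top Q\,\Phi(t,\tau_a)\,\mathrm{d}t$.

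Next I would differentiate. In $F_a$ and in the tail $\Phi_a^\top P_{i+1}\Phi_a$, the inner matrices $F_{i+1}$ and $P_{i+1}$ depend only on the intervals $\delta_{i+1},\dots,\delta_N$ and are therefore independent of $\delta_i$; applying the product rule together with Lemma~\ref{lem:state_trans_mat_deriv}, namely $\partial\Phi_a/\partial\delta_i = A_i^{n_i}\Phi_a$, yields $\partial F_a/\partial\delta_i = \Phi_a^\top\big[(A_i^{n_i})^\top F_{i+1} + F_{i+1}A_i^{n_i}\big]\Phi_a$ and an analogous expression with $P_{i+1}$ for the tail. The head term $P_a^{<}$ I would handle with the Leibniz rule. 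The crucial observation here is that, because the background grid is fixed, a change in $\delta_i$ (with all other intervals held fixed) leaves $\tau_a,\dots,\tau_i$ and every background grid point up to $\tau_i^{n_i}$ in place, so only the last sub-interval $\delta_i^{n_i} = \tau_{i+1}-\tau_i^{n_i}$ varies. Consequently the integrand $\Phi(t,\tau_a)^\top Q\,\Phi(t,\tau_a)$ is independent of $\delta_i$ at every fixed $t \in [\tau_a,\tau_{i+1})$, so only the moving upper limit contributes. Since $\partial\tau_{i+1}/\partial\delta_i = 1$ and the integrand evaluated at $t=\tau_{i+1}$ equals $\Phi_a^\top Q\,\Phi_a$, the Leibniz rule gives $\partial P_a^{<}/\partial\delta_i = \Phi_a^\top Q\,\Phi_a$.

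Finally I would add the three contributions. Collecting terms and using $S_{i+1} = P_{i+1}+F_{i+1}$ gives $\partial S_a/\partial\delta_i = \Phi_a^\top\big[Q + (A_i^{n_i})^\top S_{i+1} + S_{i+1}A_i^{n_i}\big]\Phi_a$, and the bracket is exactly $C_i$ by Definition~\ref{def:C_matrix_sto}, which is the claimed identity. I expect the main obstacle to be justifying rigorously the two independence claims — that the head integrand and the matrices $P_{i+1},F_{i+1}$ do not depend on $\delta_i$ — since these rest on the precise bookkeeping of how $\delta_i$ is absorbed entirely into the final sub-interval $\delta_i^{n_i}$ while the later switching times shift rigidly. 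The hypothesis that $\tau_{i+1}$ avoids the background grid is what keeps $n_i$ locally constant and the map $\delta_i \mapsto \Phi_a$ differentiable, so that both Lemma~\ref{lem:state_trans_mat_deriv} and the Leibniz rule apply.
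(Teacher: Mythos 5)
Your proposal is correct and follows essentially the same route as the paper's proof: the same decomposition $S_a = P_a + F_a$ from Definition~\ref{def:matrices_P_F_S}, the same splitting of the Lagrange integral at $\tau_{i+1}$ with the semigroup factorization and Lemma~\ref{lem:state_trans_mat_deriv} applied to the tail and Mayer terms, and the same final assembly via Definition~\ref{def:C_matrix_sto}. The only cosmetic difference is that the paper handles the boundary term by splitting off $\int_{\tau_i}^{\tau_{i+1}}$, changing variables to $[0,\delta_i]$ and invoking the fundamental theorem of calculus, whereas you apply the Leibniz rule with a moving upper limit directly — the same computation (and your closing remark about the independence of $P_{i+1}$, $F_{i+1}$ from $\delta_i$ is, if anything, more candid than the paper, which simply asserts it).
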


% \begin{lemma}[Derivative of Matrices $S_i$]\label{lem:deriv_S}
%   Given the time instant $t_a$ and the switching instant at $t_{p_i}$ so that $t_{a}\leq t_{p_i}$ and the interval $\delta_i$, we can compute the derivative of $S_{a}$ with respect to $\delta_i$ as
%   \begin{equation}
%     % \frac{\partial S_{\tau_{a}}}{\partial \delta_i} = \Phi(\tau_{i+1},\tau_a)^\top \bigg(Q + A_i(x_{p_{i+1}-1})^\top S_{\tau_{i+1}} + S_{\tau_{i+1}}A_i(x_{p_{i+1}-1}) \bigg) \Phi(\tau_{i+1},\tau_a)
%     \frac{\partial S_{a}}{\partial \delta_i} = \Phi(t_{p_{i+1}},t_a)^\top C_{i} \Phi(t_{p_{i+1}},t_a)
%   \end{equation}
% \end{lemma}

\begin{proof}
  From~\eqref{eq:S_sum_P_F}, we can write the derivative as
  \begin{equation}\label{eq:deriv_S_sum_P_F}
    \frac{\partial S_{a}}{\partial \delta_i} = \frac{\partial P_{a}}{\partial \delta_i} + \frac{\partial F_{a}}{\partial \delta_i}.
  \end{equation}
  Let us analyze the two components separately. We decompose the integral defined by $P_{a}$ as
  \ifTwoColumn
  \begin{align}
    \frac{\partial P_{a}}{\partial \delta_i} & = \frac{\partial}{\partial \delta_i}\left(\int_{\tau_a}^{T_\delta} \Phi(t,\tau_a)^\top Q \Phi(t,\tau_a)\mathrm{d}t\right) \nonumber\\
    &= \begin{multlined}[t][0.85\columnwidth]\frac{\partial}{\partial \delta_i}\left(\int_{\tau_a}^{\tau_{i}} \Phi(t,\tau_a)^\top Q \Phi(t,\tau_a)\mathrm{d}t\right) \\ + \frac{\partial}{\partial \delta_i}\left(\int_{\tau_{i}}^{\tau_{i+1}} \Phi(t,\tau_a)^\top Q \Phi(t,\tau_a)\mathrm{d}t\right) \\+ \frac{\partial}{\partial \delta_i}\left(\int_{\tau_{i+1}}^{T_\delta} \Phi(t,\tau_a)^\top Q \Phi(t,\tau_a)\mathrm{d}t\right)
  \end{multlined}\nonumber\\
  &= \begin{multlined}[t][0.85\columnwidth]\frac{\partial}{\partial \delta_i}\left(\int_{\tau_{i}}^{\tau_{i+1}} \Phi(t,\tau_a)^\top Q \Phi(t,\tau_a)\mathrm{d}t\right) \\+ \frac{\partial}{\partial \delta_i}\left(\int_{\tau_{i+1}}^{T_\delta} \Phi(t,\tau_a)^\top Q \Phi(t,\tau_a)\mathrm{d}t\right).\end{multlined}\label{eq:P_taui_sum}
  \end{align}
  \else
  \begin{align}
    \frac{\partial P_{a}}{\partial \delta_i} & = \frac{\partial}{\partial \delta_i}\left(\int_{\tau_a}^{T_\delta} \Phi(t,\tau_a)^\top Q \Phi(t,\tau_a)\mathrm{d}t\right) \nonumber\\
    &= \begin{multlined}[t]\frac{\partial}{\partial \delta_i}\left(\int_{\tau_a}^{\tau_{i}} \Phi(t,\tau_a)^\top Q \Phi(t,\tau_a)\mathrm{d}t\right) + \\ \frac{\partial}{\partial \delta_i}\left(\int_{\tau_{i}}^{\tau_{i+1}} \Phi(t,\tau_a)^\top Q \Phi(t,\tau_a)\mathrm{d}t\right) + \frac{\partial}{\partial \delta_i}\left(\int_{\tau_{i+1}}^{T_\delta} \Phi(t,\tau_a)^\top Q \Phi(t,\tau_a)\mathrm{d}t\right)
  \end{multlined}\nonumber\\
  &= \frac{\partial}{\partial \delta_i}\left(\int_{\tau_{i}}^{\tau_{i+1}} \Phi(t,\tau_a)^\top Q \Phi(t,\tau_a)\mathrm{d}t\right) + \frac{\partial}{\partial \delta_i}\left(\int_{\tau_{i+1}}^{T_\delta} \Phi(t,\tau_a)^\top Q \Phi(t,\tau_a)\mathrm{d}t\right)\label{eq:P_taui_sum}.
  \end{align}
  \fi
  Note that the integral from $\tau_a$ to $\tau_{i}$  does not depend on $\delta_i$ and its derivative is zero. Taking first the leftmost term in~\eqref{eq:P_taui_sum}, the integral from $\tau_{i}$ to $\tau_{i+1}$ can be written as
  \ifTwoColumn
  \begin{align}
    &\frac{\partial}{\partial \delta_i}\left(\int_{\tau_a}^{\tau_{i+1}} \Phi(t,\tau_a)^\top Q \Phi(t,\tau_a)\mathrm{d}t\right)\\ &=\Phi(\tau_{i},\tau_a)^\top \frac{\partial}{\partial \delta_i}\left(\int_{\tau_{i}}^{\tau_{i+1}} \Phi(t,\tau_{i})^\top Q \Phi(t,\tau_{i})\mathrm{d}t\right) \Phi(\tau_{i},\tau_a)\\
    &= \begin{multlined}[t][0.85\columnwidth]\Phi(\tau_{i},\tau_a)^\top\frac{\partial}{\partial \delta_i}\bigg(\int_{0}^{\delta{i}} \Phi(\eta + \tau_{i},\tau_{i})^\top Q \\ \cdot \Phi(\eta+\tau_{i},\tau_{i})\mathrm{d}\eta\bigg) \Phi(\tau_{i},\tau_{a})\end{multlined}\nonumber\\
    &= \Phi(\tau_{i+1},\tau_{a})^\top Q \Phi(\tau_{i+1},\tau_{a}),\label{eq:P_taui_sum1}
  \end{align}
  \else
  \begin{align}
    &\frac{\partial}{\partial \delta_i}\left(\int_{\tau_a}^{\tau_{i+1}} \Phi(t,\tau_a)^\top Q \Phi(t,\tau_a)\mathrm{d}t\right)\\ &=\Phi(\tau_{i},\tau_a)^\top \frac{\partial}{\partial \delta_i}\left(\int_{\tau_{i}}^{\tau_{i+1}} \Phi(t,\tau_{i})^\top Q \Phi(t,\tau_{i})\mathrm{d}t\right) \Phi(\tau_{i},\tau_a)\\
    &= \Phi(\tau_{i},\tau_a)^\top\frac{\partial}{\partial \delta_i}\left(\int_{0}^{\delta{i}} \Phi(\eta + \tau_{i},\tau_{i})^\top Q \Phi(\eta+\tau_{i},\tau_{i})\mathrm{d}\eta\right) \Phi(\tau_{i},\tau_{a})\nonumber\\
    &= \Phi(\tau_{i+1},\tau_{a})^\top Q \Phi(\tau_{i+1},\tau_{a}),\label{eq:P_taui_sum1}
  \end{align}
  \fi
  where in the second equality we applied the change of variables $\eta = t - \tau_i$ and in the third equality the fundamental theorem of calculus. Next taking the rightmost term in~\eqref{eq:P_taui_sum}, the integral from $\tau_{i+1}$ to $T_\delta$ can be obtained as
  \ifTwoColumn
  \begin{align}
    &\frac{\partial}{\partial \delta_i}\left(\int_{\tau_{i+1}}^{T_\delta} \Phi(t,\tau_{a})^\top Q \Phi(t,\tau_{a})\mathrm{d}t\right) = \nonumber\\
    &=\begin{multlined}[t][0.9\columnwidth]\frac{\partial}{\partial \delta_i}\bigg(   \Phi(\tau_{i+1}, \tau_{a})^\top\bigg(\int_{\tau_{i+1}}^{T_\delta} \Phi(t,\tau_{i+1})^\top Q \\ \cdot \Phi(t,\tau_{i+1})\mathrm{d}t\bigg) \Phi(\tau_{i+1}, \tau_{a})\bigg)\end{multlined}\nonumber\\
    &=\frac{\partial}{\partial \delta_i}\left(   \Phi(\tau_{i+1}, \tau_{a})^\top P_{i+1} \Phi(\tau_{i+1}, \tau_{a})\right)\nonumber\\
    &= \begin{multlined}[t][0.9\columnwidth]\frac{\partial}{\partial \delta_i}\bigg(   \Phi(\tau_{i+1}, \tau_{a})^\top\bigg) P_{i+1} \Phi(\tau_{i+1}, \tau_{a}) \\ +    \Phi(\tau_{i+1}, \tau_{a})^\top P_{i+1} \frac{\partial}{\partial \delta_i}\bigg(\Phi(\tau_{i+1}, \tau_{a})\bigg)\end{multlined}\nonumber\\
    &=  \Phi(\tau_{i+1}, \tau_{a})^\top \left((A_{i}^{n_i})^\top  P_{i+1} +  P_{i+1}  A_{i}^{n_i}\right)  \Phi(\tau_{i+1}, \tau_{a}). \label{eq:P_taui_sum2}
  \end{align}
  \else
  \begin{align}
    &\frac{\partial}{\partial \delta_i}\left(\int_{\tau_{i+1}}^{T_\delta} \Phi(t,\tau_{a})^\top Q \Phi(t,\tau_{a})\mathrm{d}t\right) = \nonumber\\
    &=\frac{\partial}{\partial \delta_i}\left(   \Phi(\tau_{i+1}, \tau_{a})^\top\left(\int_{\tau_{i+1}}^{T_\delta} \Phi(t,\tau_{i+1})^\top Q \Phi(t,\tau_{i+1})\mathrm{d}t\right) \Phi(\tau_{i+1}, \tau_{a})\right)\nonumber\\
    &=\frac{\partial}{\partial \delta_i}\left(   \Phi(\tau_{i+1}, \tau_{a})^\top P_{i+1} \Phi(\tau_{i+1}, \tau_{a})\right)\nonumber\\
    &= \frac{\partial}{\partial \delta_i}\bigg(   \Phi(\tau_{i+1}, \tau_{a})^\top\bigg) P_{i+1} \Phi(\tau_{i+1}, \tau_{a}) +    \Phi(\tau_{i+1}, \tau_{a})^\top P_{i+1} \frac{\partial}{\partial \delta_i}\bigg(\Phi(\tau_{i+1}, \tau_{a})\bigg)\nonumber\\
    &=  \Phi(\tau_{i+1}, \tau_{a})^\top \left((A_{i}^{n_i})^\top  P_{i+1} +  P_{i+1}  A_{i}^{n_i}\right)  \Phi(\tau_{i+1}, \tau_{a}). \label{eq:P_taui_sum2}
  \end{align}
  \fi
  In the first and second equalities we decomposed the state transition matrices and used the definition of $P_{i+1}$ of~\eqref{eq:def_P}. In the third equality we applied the chain rule noting that $P_{i+1}$ is independent from $\delta_i$. Then, in the last equality we applied Lemma~\ref{lem:state_trans_mat_deriv} to compute the derivatives. We rewrite Equation~\eqref{eq:P_taui_sum} using~\eqref{eq:P_taui_sum1} and~\eqref{eq:P_taui_sum2} obtaining
  \ifTwoColumn
  \begin{equation}\label{eq:deriv_P_final}
  \frac{\partial P_{a}}{\partial \delta_i} =\Phi(\tau_{i+1}, \tau_{a})^\top \!\bigg(Q + (A_{i}^{n_i})^\top  P_{i+1} + P_{i+1}  A_{i}^{n_i}\bigg)\! \Phi(\tau_{i+1}, \tau_{a}).
  \end{equation}
  \else
  \begin{equation}\label{eq:deriv_P_final}
    \frac{\partial P_{a}}{\partial \delta_i} = \Phi(\tau_{i+1}, \tau_{a})^\top \left(Q + (A_{i}^{n_i})^\top  P_{i+1} +  P_{i+1}  A_{i}^{n_i}\right)  \Phi(\tau_{i+1}, \tau_{a}).
  \end{equation}
  \fi

  We now focus on the derivative of $F_{a}$ in~\eqref{eq:deriv_S_sum_P_F} that can be written so that
  \ifTwoColumn
  \begin{align}
    &\frac{\partial F_{a}}{\partial \delta_i} = \frac{\partial}{\partial \delta_i}\left(\Phi(T_\delta, \tau_{a})^\top E  \Phi(T_\delta, \tau_{a})\right) \nonumber\\
    &= \frac{\partial}{\partial \delta_i}\bigg(\!\Phi(\tau_{i+1}, \tau_{a})^\top \!\bigg(\!\Phi(T_\delta, \tau_{i+1})^\top \!E  \Phi(T_\delta, \tau_{i+1})\!\bigg)\Phi(\tau_{i+1}, \tau_{a})\!\bigg) \nonumber\\
    &= \frac{\partial}{\partial \delta_i}\bigg(\Phi(\tau_{i+1}, \tau_{a})^\top F_{i+1}\Phi(\tau_{i+1}, \tau_{a})\bigg)  \nonumber\\
    &= \begin{multlined}[t][0.9\columnwidth]\frac{\partial}{\partial \delta_i}\bigg(\Phi(\tau_{i+1}, \tau_{a})^\top\bigg) F_{i+1}\Phi(\tau_{i+1}, \tau_{a})  \\+ \Phi(\tau_{i+1}, \tau_{a})^\top F_{i+1}\frac{\partial}{\partial \delta_i}\bigg(\Phi(\tau_{i+1}, \tau_{a})\bigg) \end{multlined}\nonumber\\
    &= \Phi(\tau_{i+1}, \tau_{a})^\top \bigg((A_{i}^{n_i})^\top  F_{i+1} +  F_{i+1}  A_{i}^{n_i}\bigg)\Phi(\tau_{i+1}, \tau_{a}),\label{eq:deriv_F_final}
  \end{align}
  \else
  \begin{align}
    &\frac{\partial F_{a}}{\partial \delta_i} = \frac{\partial}{\partial \delta_i}\left(\Phi(T_\delta, \tau_{a})^\top E  \Phi(T_\delta, \tau_{a})\right) \nonumber\\
    &= \frac{\partial}{\partial \delta_i}\bigg(\Phi(\tau_{i+1}, \tau_{a})^\top \bigg(\Phi(T_\delta, \tau_{i+1})^\top E  \Phi(T_\delta, \tau_{i+1})\bigg)\Phi(\tau_{i+1}, \tau_{a})\bigg) \nonumber\\
    &= \frac{\partial}{\partial \delta_i}\bigg(\Phi(\tau_{i+1}, \tau_{a})^\top F_{i+1}\Phi(\tau_{i+1}, \tau_{a})\bigg)  \nonumber\\
    &= \frac{\partial}{\partial \delta_i}\bigg(\Phi(\tau_{i+1}, \tau_{a})^\top\bigg) F_{i+1}\Phi(\tau_{i+1}, \tau_{a})  + \Phi(\tau_{i+1}, \tau_{a})^\top F_{i+1}\frac{\partial}{\partial \delta_i}\bigg(\Phi(\tau_{i+1}, \tau_{a})\bigg) \nonumber\\
    &= \Phi(\tau_{i+1}, \tau_{a})^\top \bigg((A_{i}^{n_i})^\top  F_{i+1} +  F_{i+1}  A_{i}^{n_i}\bigg)\Phi(\tau_{i+1}, \tau_{a}),\label{eq:deriv_F_final}
  \end{align}
  \fi
  In the first and second equalities we decomposed the state transition matrices and used the definition of $F_{i+1}$ from~\eqref{eq:def_F}. In the third equality we applied the chain rule noting that $F_{i+1}$ is independent from $\delta_i$. Then, in the last equality we applied Lemma~\ref{lem:state_trans_mat_deriv} to compute the derivatives.

  By adding~\eqref{eq:deriv_P_final} and~\eqref{eq:deriv_F_final} as in~\eqref{eq:deriv_S_sum_P_F} and applying Definition~\ref{def:matrices_P_F_S} we obtain
  \begin{equation}
    \frac{\partial S_{a}}{\partial \delta_i} = \Phi(\tau_{i+1},\tau_{a})^\top \bigg(Q + \left(A_i^{n_i}\right)^\top S_{i+1} + S_{i+1}A_i^{n_i} \bigg) \Phi(\tau_{i+1},\tau_{a}).
  \end{equation}
  The result follows by using Definition~\ref{def:C_matrix_sto}.
\end{proof}

We are now in a position to prove each of the statements in Theorem~\ref{thm:main_result_sto} in turn:

\paragraph{Cost function -- proof of (i)}
\label{par:Cost Function}
The cost function in Equation~\eqref{eq:cost_function} can be directly derived from its definition in Problem~\eqref{eq:linearized_sto_problem} and Definition~\ref{def:matrices_P_F_S}.

\paragraph{Gradient -- proof of (ii)}
The gradient of the cost function can be derived by taking the derivative of~\eqref{eq:cost_function}. By considering the component related to $\delta_i$, we can write
\begin{align}
  \frac{\partial J(\delta)}{\partial \delta_i} &= \frac{\partial}{\partial \delta_i}\bigg( x_0^\top S_{0} x_0\bigg) \nonumber\\
  &=  x_0^\top\frac{\partial S_{0}}{\partial \delta_i}  x_0 \nonumber\\
  % &= x_0^\top \Phi(\tau_{i+1},0)^\top \bigg(Q + A_i(x_{p_{i+1}-1})^\top S_{\tau_{i+1}} + S_{\tau_{i+1}}A_i(x_{p_{i+1}-1}) \bigg) \Phi(\tau_{i+1},0) x_0 \nonumber\\
  &= x_0^\top \Phi(\tau_{i+1},0)^\top C_{i} \Phi(\tau_{i+1},0) x_0 \nonumber\\
  % &= x_{p_{i+1}}^\top \bigg(Q + A_i(x_{p_{i+1}-1})^\top S_{\tau_{i+1}} + S_{\tau_{i+1}}A_i(x_{p_{i+1}-1}) \bigg) x_{p_{i+1}} \nonumber\\
  % &= x_{p_{i+1}}^\top \bigg(Q + 2S_{\tau_{i+1}}A_i(x_{p_{i+1}-1}) \bigg) x_{p_{i+1}}\nonumber\\
  &=x_{i+1}^\top C_{i}x_{i+1}.\label{eq:gradient_sto_proof}
\end{align}
In the second equality the initial state has been taken out from the derivative operator since $x_0$ fixed.
In the third equality we applied Lemma~\ref{lem:deriv_S} and in the fourth equality we used Definition~\ref{def:state_evolution} to obtain $x_{i+1}$.
% In the fifth equality we took the transpose of the scalar element $x_{p_{i+1}}^\top A_i(x_{p_{i+1}-1})^\top S_{\tau_{i+1}} x_{p_{i+1}}$ which is equal to itself.
% Finally, we applied Definition~\ref{def:C_matrix_sto} to obtain $C_{\tau_i}$.
The result holds for $i=0,\dots,N$.

\paragraph{Hessian -- proof of (iii)}

The Hessian of the cost function can be derived by taking the derivative of Equation~\eqref{eq:gradient_sto_proof}.
Let us first take the derivative with respect to the same interval $\delta_i$ writing
\ifTwoColumn
\begin{align}
  &\frac{\partial^2 J(\delta)}{\partial \delta_i^2 } = \frac{\partial}{\partial \delta_i}\bigg( x_{i+1}^\top C_{i}x_{i+1}\bigg) \nonumber\\
  &= \frac{\partial}{\partial \delta_i}\bigg( x_0^\top \Phi(\tau_{i+1},0)^\top  C_{i} \Phi(\tau_{i+1},0)x_0\bigg)\nonumber\\
  &= \begin{multlined}[t][0.85\columnwidth]x_0^\top \frac{\partial}{\partial \delta_i}\bigg( \Phi(\tau_{i+1},0)^\top \bigg) C_{i} \Phi(\tau_{i+1},0)x_0 \\+  x_0^\top \Phi(\tau_{i+1},0)^\top  C_{i}\frac{\partial}{\partial \delta_i}\bigg(  \Phi(\tau_{i+1},0)\bigg)x_0 \end{multlined}\nonumber\\
  &= \begin{multlined}[t][0.85\columnwidth]x_0^\top \Phi(\tau_{i+1},0)^\top \left(A_{i}^{n_i}\right)^\top C_{i} \Phi(\tau_{i+1},0)x_0 \\+  x_0^\top \Phi(\tau_{i+1},0)^\top  C_{i}  A_{i}^{n_i} \Phi(\tau_{i+1},0) x_0\end{multlined}\nonumber\\
  &= x_{i+1}^\top \left(A_{i}^{n_i}\right)^\top C_{i} x_{i+1} +  x_{i+1}^\top  C_{i}  A_{i}^{n_i} x_{i+1}\nonumber\\
  &= 2x_{i+1}^\top  C_{i}  A_{i}^{n_i} x_{i+1}\nonumber\\
  &= 2x_{i+1}^\top  C_{i}  \Phi(\tau_{i+1},\tau_{i+1})A_{i}^{n_i} x_{i+1}.
\end{align}
\else
\begin{align}
  &\frac{\partial^2 J(\delta)}{\partial \delta_i^2 } = \frac{\partial}{\partial \delta_i}\bigg( x_{i+1}^\top C_{i}x_{i+1}\bigg) \nonumber\\
  &= \frac{\partial}{\partial \delta_i}\bigg( x_0^\top \Phi(\tau_{i+1},0)^\top  C_{i} \Phi(\tau_{i+1},0)x_0\bigg)\nonumber\\
  &= x_0^\top \frac{\partial}{\partial \delta_i}\bigg( \Phi(\tau_{i+1},0)^\top \bigg) C_{i} \Phi(\tau_{i+1},0)x_0 +  x_0^\top \Phi(\tau_{i+1},0)^\top  C_{i}\frac{\partial}{\partial \delta_i}\bigg(  \Phi(\tau_{i+1},0)\bigg)x_0 \nonumber\\
  &= x_0^\top \Phi(\tau_{i+1},0)^\top \left(A_{i}^{n_i}\right)^\top C_{i} \Phi(\tau_{i+1},0)x_0 +  x_0^\top \Phi(\tau_{i+1},0)^\top  C_{i}  A_{i}^{n_i} \Phi(\tau_{i+1},0) x_0\nonumber\\
  &= x_{i+1}^\top \left(A_{i}^{n_i}\right)^\top C_{i} x_{i+1} +  x_{i+1}^\top  C_{i}  A_{i}^{n_i} x_{i+1}\nonumber\\
  &= 2x_{i+1}^\top  C_{i}  A_{i}^{n_i} x_{i+1}\nonumber\\
  &= 2x_{i+1}^\top  C_{i}  \Phi(\tau_{i+1},\tau_{i+1})A_{i}^{n_i} x_{i+1}.
\end{align}
\fi
In the third equality we took into account that $C_{i}$ and $x_0$ do not depend on $\delta_i$ and we applied the chain rule. In the fourth equality we applied Lemma~\ref{lem:state_trans_mat_deriv} and in the fifth equality Definition~\ref{def:state_evolution}. Finally, in the sixth equality we took the transpose of the first term which is a scalar and we used the identity $I = \Phi(\tau_{i+1}, \tau_{i+1})$ to get the desired result ($I$ is the identity matrix of appropriate dimensions).

In the case when we take the derivative with respect to $\delta_\ell$ with $\ell>i$, we can write
\begin{align}
    &\frac{\partial^2 J(\delta)}{\partial \delta_\ell \partial \delta_i } = \frac{\partial}{\partial \delta_\ell}\bigg( x_{i+1}^\top C_{i}x_{i+1}\bigg) \nonumber\\
    &=\frac{\partial}{\partial \delta_\ell}\bigg( x_{i+1}^\top \bigg(Q +   A_i^{n_i\top} S_{i+1} + S_{i+1}A_i^{n_i}\bigg)x_{i+1}\bigg) \nonumber\\
    &=x_{i+1}^\top A_i^{n_i\top} \frac{\partial}{\partial \delta_\ell}\bigg( S_{i+1}\bigg)x_{i+1} + x_{i+1}^\top \frac{\partial}{\partial \delta_\ell}\bigg(S_{i+1}\bigg)A_i^{n_i}x_{i+1} \nonumber\\
    &=2x_{i+1}^\top \frac{\partial}{\partial \delta_\ell}\bigg(S_{i+1}\bigg)A_i^{n_i}x_{i+1} \nonumber\\
    &=2x_{i+1}^\top \Phi(\tau_{\ell+1},\tau_{i+1})^\top C_{\ell} \Phi(\tau_{\ell+1},\tau_{i+1})A_i^{n_i}x_{i+1} \nonumber\\
    &=2x_{\ell+1}^\top C_{\ell} \Phi(\tau_{\ell+1},\tau_{i+1})A_i^{n_i}x_{i+1}
\end{align}
in the second equality we applied Definition~\ref{def:C_matrix_sto}. In the third inequality we brought the terms not depending on $\delta_{\ell}$ outside of the derivative operator. In the fourth equality we took the transpose of the first element which is a scalar. In the fifth equality we applied Lemma~\ref{lem:deriv_S} and finally we obtained $x_{\ell+1}$ from Definition~\ref{def:state_evolution}.

\hfill\qedsymbol

\section{Proof of Proposition~\ref{prop:S_recursion}}
\label{sec:Proof of Proposition prop:S_recursion}

From Definition~\ref{def:matrices_P_F_S}, we can obtain~\eqref{eq:recursion_S1}  by directly setting $i=N+1$.

The recursion~\eqref{eq:recursion_S2} can be derived by using Definition~\ref{def:matrices_P_F_S} to rewrite Definition~\ref{def:matrices_P_F_S} as follows:
\ifTwoColumn
\begin{align*}
  &S_i^{j} = \int_{\tau_i^{j}}^{T_\delta}\Phi(t,\tau_i^{j})^\top Q \Phi(t,\tau_i^{j})\mathrm{d}t + \Phi(T_\delta,\tau_i^{j})^\top E \Phi(T_\delta,\tau_i^{j})\\
  &= \begin{multlined}[t][0.85\columnwidth]\int_{\tau_i^{j}}^{\tau_{i}^{j+1}}\Phi(t,\tau_i^{j})^\top Q \Phi(t,\tau_i^{j})\mathrm{d}t \\+ \int_{\tau_{i}^{j+1}}^{T_\delta}\Phi(t,\tau_i^{j})^\top Q \Phi(t,\tau_i^{j})\mathrm{d}t + \Phi(T_\delta,\tau_i^{j})^\top E \Phi(T_\delta,\tau_i^{j})\end{multlined}\\
  &=\begin{multlined}[t][0.9\columnwidth]\int_{\tau_i^{j}}^{\tau_{i}^{j+1}}\Phi(t,\tau_i^{j})^\top Q \Phi(t,\tau_i^{j})\mathrm{d}t+\\ +\Phi(\tau_{i}^{j+1},\tau_i^{j})^\top\bigg(\int_{\tau_{i}^{j+1}}^{T_\delta}\Phi(t,\tau_{i}^{j+1})^\top Q \Phi(t,\tau_{i}^{j+1})\mathrm{d}t \\ + \Phi(T_\delta,\tau_{i}^{j+1})^\top E \Phi(T_\delta,\tau_{i}^{j+1})\bigg)\Phi(\tau_{i}^{j+1},\tau_i^{j})
  \end{multlined}\\
  &=\begin{multlined}[t][0.85\columnwidth]\int_{\tau_i^{j}}^{\tau_{i}^{j+1}}\Phi(t,\tau_i^{j})^\top Q \Phi(t,\tau_i^{j})\mathrm{d}t \\ + \Phi(\tau_{i}^{j+1},\tau_i^{j})^\top S_{i}^{j+1} \Phi(\tau_{i}^{j+1},\tau_i^{j})\end{multlined}\\
  &=\begin{multlined}[t][0.85\columnwidth]\int_{0}^{\delta_{i}^{j}}\Phi(\eta+\tau_i^{j},\tau_i^{j})^\top Q \Phi(\eta+\tau_i^{j},\tau_i^{j})\mathrm{d}\eta \\+ \Phi(\tau_{i}^{j+1},\tau_i^{j})^\top S_{i}^{j+1} \Phi(\tau_{i}^{j+1},\tau_i^{j})\end{multlined}\\
  &=\int_{0}^{\delta_{i}^{j}} e^{A_i^{j\top} \eta}  Q e^{A_i^{j}\eta} \mathrm{d}\eta + e^{A_i^{j\top} \delta_{i}^{j}}S_{i}^{j+1}  e^{A_i^{j\top} \delta_{i}^{j}}\\
  &= M_i^j + \mathcal{E}_{i}^{j\top} S_{i}^{j+1} \mathcal{E}_i^{j}.
\end{align*}
\else
\begin{align*}
  &S_i^{j} = \int_{\tau_i^{j}}^{T_\delta}\Phi(t,\tau_i^{j})^\top Q \Phi(t,\tau_i^{j})\mathrm{d}t + \Phi(T_\delta,\tau_i^{j})^\top E \Phi(T_\delta,\tau_i^{j})\\
  &= \int_{\tau_i^{j}}^{\tau_{i}^{j+1}}\Phi(t,\tau_i^{j})^\top Q \Phi(t,\tau_i^{j})\mathrm{d}t+ \int_{\tau_{i}^{j+1}}^{T_\delta}\Phi(t,\tau_i^{j})^\top Q \Phi(t,\tau_i^{j})\mathrm{d}t + \Phi(T_\delta,\tau_i^{j})^\top E \Phi(T_\delta,\tau_i^{j})\\
  &= \begin{multlined}[t]\int_{\tau_i^{j}}^{\tau_{i}^{j+1}}\Phi(t,\tau_i^{j})^\top Q \Phi(t,\tau_i^{j})\mathrm{d}t+\\ +\Phi(\tau_{i}^{j+1},\tau_i^{j})^\top\bigg(\int_{\tau_{i}^{j+1}}^{T_\delta}\Phi(t,\tau_{i}^{j+1})^\top Q \Phi(t,\tau_{i}^{j+1})\mathrm{d}t + \Phi(T_\delta,\tau_{i}^{j+1})^\top E \Phi(T_\delta,\tau_{i}^{j+1})\bigg)\Phi(\tau_{i}^{j+1},\tau_i^{j})\end{multlined}\\
  &=\int_{\tau_i^{j}}^{\tau_{i}^{j+1}}\Phi(t,\tau_i^{j})^\top Q \Phi(t,\tau_i^{j})\mathrm{d}t+ \Phi(\tau_{i}^{j+1},\tau_i^{j})^\top S_{i}^{j+1} \Phi(\tau_{i}^{j+1},\tau_i^{j})\\
  &=\int_{0}^{\delta_{i}^{j}}\Phi(\eta+\tau_i^{j},\tau_i^{j})^\top Q \Phi(\eta+\tau_i^{j},\tau_i^{j})\mathrm{d}\eta + \Phi(\tau_{i}^{j+1},\tau_i^{j})^\top S_{i}^{j+1} \Phi(\tau_{i}^{j+1},\tau_i^{j})\\
  &=\int_{0}^{\delta_{i}^{j}} e^{A_i^{j\top} \eta}  Q e^{A_i^{j}\eta} \mathrm{d}\eta + e^{A_i^{j\top} \delta_{i}^{j}}S_{i}^{j+1}  e^{A_i^{j\top} \delta_{i}^{j}}\\
  &= M_i^j + \mathcal{E}_{i}^{j\top} S_{i}^{j+1} \mathcal{E}_i^{j}.
\end{align*}
\fi
In the second equality we split the integral in two parts.
In the third equality we bring the matrices $\Phi(\tau_{i}^{j+1},\tau_i^{j})$ and $\Phi(\tau_{i}^{j+1},\tau_i^{j})^\top$ outside the integrals ince they do not depend on $t$.
In the fourth equality we apply the Definition~\ref{def:matrices_P_F_S} to obtain $S_{i}^{j+1}$.
In the fifth equality we applied a change of variables $\eta = t - \tau_i^j$.
In the sixth equality we rewrite the transition matrices using matrix exponentials.
Finally, we apply Definition~\ref{def:auxiliary_matrices_sto} to complete the proof.

\hfill \qedsymbol

\ifTwoColumn
  \balance
\fi

\ifTechReport
  % use section* for acknowledgment
  % \section*{Acknowledgment}
  % This work was supported by the People Programme (Marie Curie Actions) of the European Union's Seventh Framework Programme (FP7/2007-2013) under REA grant agreement no 607957 (TEMPO).
\fi

% Bibliography
  \bibliographystyle{IEEEtran}
  % argument is your BibTeX string definitions and bibliography database(s)
  \bibliography{bibliography}

  % biography section
  %
  % If you have an EPS/PDF photo (graphicx package needed) extra braces are
  % needed around the contents of the optional argument to biography to prevent
  % the LaTeX parser from getting confused when it sees the complicated
  % \includegraphics command within an optional argument. (You could create
  % your own custom macro containing the \includegraphics command to make things
  % simpler here.)
  %\begin{IEEEbiography}[{\includegraphics[width=1in,height=1.25in,clip,keepaspectratio]{mshell}}]{Michael Shell}
  % or if you just want to reserve a space for a photo:

  % \begin{IEEEbiography}{Michael Shell}
  % Biography text here.
  % \end{IEEEbiography}

  % % if you will not have a photo at all:
  % \begin{IEEEbiographynophoto}{John Doe}
  % Biography text here.
  % \end{IEEEbiographynophoto}

  % % insert where needed to balance the two columns on the last page with
  % % biographies
  % %\newpage

  % \begin{IEEEbiographynophoto}{Jane Doe}
  % Biography text here.
  % \end{IEEEbiographynophoto}

  \ifTechReport \else
  \begin{IEEEbiography}[{\includegraphics[width=1in,height=1.25in,clip,keepaspectratio]{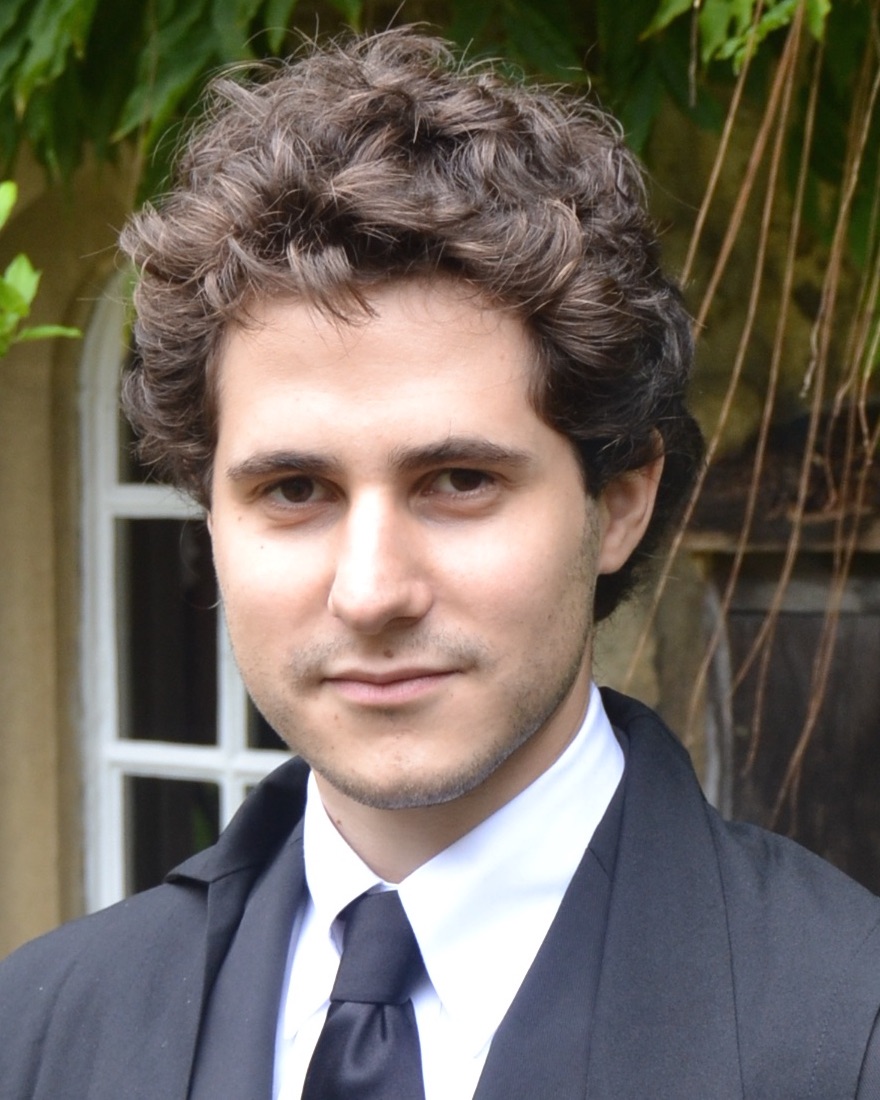}}]{Bartolomeo Stellato} (S'16)
  received the B.Sc. degree in automation engineering from Politecnico di Milano, Milano, Italy in 2012 and the M.Sc. degree in robotics, systems and control from ETH Z\"{u}rich, Z\"{u}rich, Switzerland in 2014. He is pursuing his D.Phil. (Ph.D.) degree in control systems and optimization at the University of Oxford under the supervision of Professor Paul J.\ Goulart.

  His current research interests are integer optimal control of fast dynamical systems, model predictive control and discrete optimization.
  \end{IEEEbiography}

  % \begin{IEEEbiography}[{\includegraphics[width=1in,height=1.25in,clip,keepaspectratio]{img/photos/tobias.jpg}}]{Tobias Geyer} (M'08-SM'10)
  \begin{IEEEbiography}[{\includegraphics[width=1in,height=1.25in,clip,keepaspectratio]{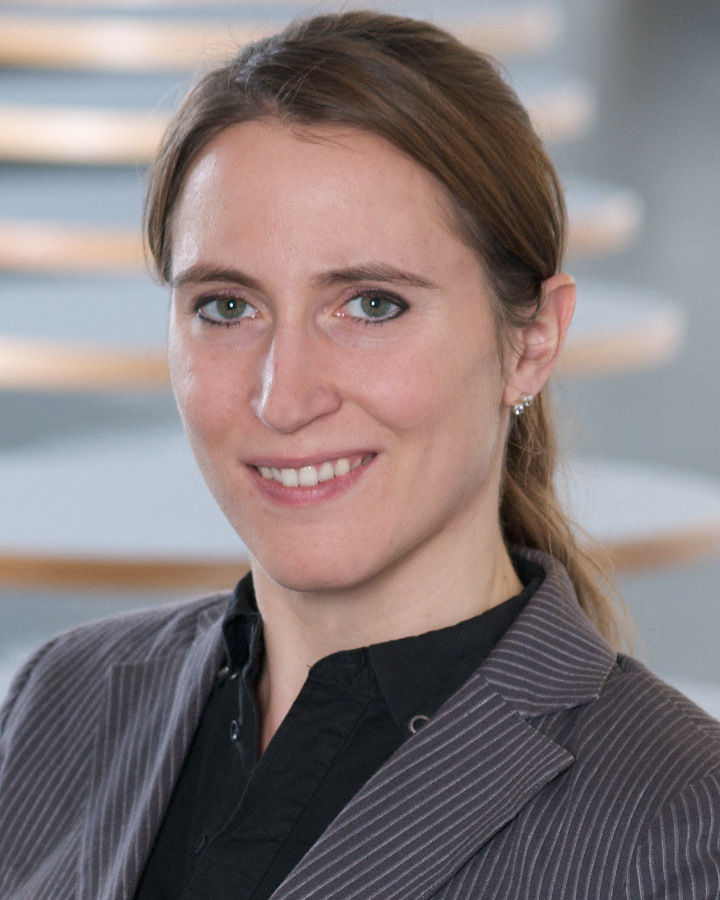}}]{Sina Ober-Bl\"{o}baum} received her Dipl.\ Math.\ in 2004 and her Ph.D.\ degree in applied mathematics in 2008 both from the University of Paderborn, Germany.

  She has been a Postdoctoral Scholar at California Institute of Technology, CA, USA, from 2008 to 2009. From 2009 to 2015 she has been a Junior Professor at the Department of Mathematics at the University of Paderborn. She is currently Associate Professor of control engineering at the Department of Engineering Science and a Tutorial Fellow in Engineering at Harris Manchester College, University of Oxford, Oxford, U.K.. Her research focus lies in the development and analysis of structure-preserving simulation and optimal control methods for mechanical, electrical and hybrid systems, with a wide range of application areas including astrodynamics, drive technology and robotics.
  \end{IEEEbiography}

  \begin{IEEEbiography}[{\includegraphics[width=1in,height=1.25in,clip,keepaspectratio]{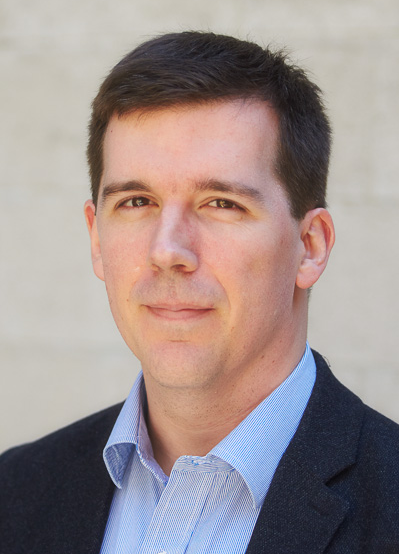}}]{Paul J.\ Goulart} (M'04)
  received the B.Sc. and M.Sc. degrees in aeronautics and astronautics from the Massachusetts Institute of Technology (MIT), Cambridge, MA, USA.  He was selected as a Gates Scholar at the University of Cambridge, Cambridge, U.K., where he received the Ph.D. degree in control engineering in 2007.

  From 2007 to 2011, he was a Lecturer in control systems in the Department of Aeronautics, Imperial
  College London, and from 2011 to 2014 a Senior Researcher in the Automatic Control Laboratory,
  ETH Z\"{u}rich. He is currently an Associate Professor in the Department of Engineering Science and Tutorial Fellow, St. Edmund Hall, University of Oxford, Oxford, U.K.. His research interests include model predictive control, robust optimization, and control of fluid flows.
  \end{IEEEbiography}

  \fi
  % You can push biographies down or up by placing
  % a \vfill before or after them. The appropriate
  % use of \vfill depends on what kind of text is
  % on the last page and whether or not the columns
  % are being equalized.

  %\vfill

  % Can be used to pull up biographies so that the bottom of the last one
  % is flush with the other column.
  %\enlargethispage{-5in}

  % that's all folks
\end{document}